\providecommand{\SetAlgoLined}{\SetLine} 
\providecommand{\DontPrintSemicolon}{\dontprintsemicolon}
\providecommand{\LinesNumbered}{\linesnumbered}
\theoremstyle{plain}
\newtheorem{theorem}{Theorem}
\newtheorem{assumption}{Assumption}
\newtheorem{lemma}{Lemma}
\newtheorem{proposition}[lemma]{Proposition}
\theoremstyle{definition}
\newtheorem{definition}{Definition}
\theoremstyle{remark}
\newcommand{\argmin}{\operatorname{argmin}}
\newcommand{\argmax}{\operatorname{argmax}}
\newcommand{\paren}[1]{\left( \left. #1 \right. \right)} 
\newcommand{\croch}[1]{\left[ \left. #1 \right. \right]} 
\newcommand{\set}[1]{\left\{ \left. #1 \right. \right\}}
\newcommand{\1}{1\hskip-2.6pt{\rm l}}
\newcommand{\sachant}{\;\right|\left.} 
\newcommand{\R}{\mathbb{R}}
\newcommand{\Lbb}{\mathbb{L}}
\newcommand{\TVF}{\ensuremath{\mathrm{TVF}}}
\newcommand{\Acal}{\mathcal{A}}
\newcommand{\Bcal}{\mathcal{B}}
\newcommand{\Dcal}{\mathcal{D}}
\newcommand{\Ecal}{\mathcal{E}}
\newcommand{\Fcal}{\mathcal{F}}
\newcommand{\Gcal}{\mathcal{G}}
\newcommand{\Hcal}{\mathcal{H}}
\newcommand{\Ical}{\mathcal{I}}
\newcommand{\Kcal}{\mathcal{K}}
\newcommand{\Lcal}{\mathcal{L}}
\newcommand{\Mcal}{\mathcal{M}}
\newcommand{\Pcal}{\mathcal{P}}
\newcommand{\Scal}{\mathcal{S}}
\newcommand{\Tcal}{\mathcal{T}}
\newcommand{\Xcal}{\mathcal{X}}
\newcommand{\egaldef}{:=} 
\newcommand{\critVFCV}{\crit_{\mathrm{VFCV}}} 
\newcommand{\critTVF}{\crit_{\mathrm{TVF}}} 
\newcommand{\mM}{m\in\Mcal}
\newcommand{\propref}[1]{Proposition~\ref{prop:#1}} 
\newcommand{\secref}[1]{Section~\ref{sec:#1}} 
\newcommand{\subref}[1]{Section~\ref{sub:#1}} 
\newcommand{\tabref}[1]{Table~\ref{tab:#1}} 
\newcommand{\figref}[1]{Figure~\ref{fig:#1}} 
\newcommand{\crit}{\mathop{\rm crit}\nolimits}
\newcommand{\Xbf}{\mathbf{X}}
\newcommand{\scal}[2]{\langle #1,#2\rangle}
\begin{document}
\title{A new V-fold type procedure based on robust tests}%
\author[1]{Lucien Birg\'e\thanks{lucien.birge@upmc.fr}}
\author[2,3]{Nelo Magalh\~aes\thanks{nelo.moltermagalhaes@gmail.com}}
\author[2,3]{Pascal Massart\thanks{pascal.massart@math.u-psud.fr}}
\affil[1]{LPMA,  UPMC Universit\'{e} Paris 06}
\affil[2]{\'{E}quipe Probabilit\'{e}s et Statistiques, Universit\'{e}  Paris-Sud 11}
\affil[3]{INRIA team Select}
\date{June 2015}

\maketitle

\begin{abstract}
We define a general V-fold cross-validation type method based on robust tests, which is an extension of the hold-out defined by Birg\'{e} \cite[Section 9]{Birge2006b}. We give some theoretical results showing that, under some weak assumptions on the considered statistical procedures, our selected estimator satisfies an oracle type inequality. We also introduce a fast algorithm that implements our method. Moreover we show in our simulations that this V-fold performs generally well for estimating a density for different sample sizes, and can handle well-known problems, such as binwidth selection for histograms or bandwidth selection for kernels. We finally provide a comparison with other classical V-fold methods and study empirically the influence of the value of $V$ on the risk.
\end{abstract}

\footnotetext{
{\it \hspace{2.3mm}Key words and phrases.} T-estimation, density estimation, V-fold cross-validation, Hellinger loss. 
}

\section{Introduction}\label{sec:I}
The purpose of this paper is to offer a new method to solve the following problem. Suppose we are given i.i.d.\ observations from an unknown distribution $P_s$ to be estimated. This distribution is often assumed to have a density $s$ with respect to some given measure $\mu$, hence our notation, but we shall also consider the case when $P_s$ is not absolutely continuous with respect to $\mu$, keeping the same notation $P_s$ for the true distribution, in which case the subscript $s$ just indicates that $P_s$ is the distribution of the observations. 

We also have at hand  a family of statistical procedures or algorithms $\{{\cal A}_m,\,m\in{\cal M}\}$ that can be applied to the observations in order to derive estimators of $P_s$. How can we use our data in order to choose one potentially optimal algorithm in the family, provided that a criterion of quality for the estimators has been chosen? Let us now be somewhat more precise.

\subsection{The problem of procedure choice}\label{sub:I1}
We observe an $n$-sample $\Xbf=\{X_{1},\ldots,X_{n}\}$ of random variables $X_i$ with values 
in the measured space $(\Xcal,\Ecal)$ and we assume (temporarily) that the distribution  $P_s=
s\cdot\mu$ of the $X_i$ admits a density $s$ with respect to some given positive measure $\mu$ 
on $\Xcal$ and that $s$ belongs to some given subset $\Scal$ of $\Lbb_1(\mu)$. The purpose here 
is to use the observations in order to design an estimator $\widehat{s}=\widehat{s}(\Xbf)$ of $s$.

There is a huge amount of strategies for solving this estimation problem, depending on the additional assumptions one makes about $s$. We shall use the notion of {\em statistical procedure} (procedure for short), also denoted {\em statistical algorithm} in what follows, in order to properly formalize these strategies. Following \cite{Arlot-Celisse2010}, we define a \textit{procedure} or an \textit{algorithm} as any measurable mapping $\Acal$ from $\bigcup_{k\geq1} \Xcal^k$ to $\Scal$. Such a procedure associates to any random sample $\mathbf{Y}_k\in\Xcal^k$ an estimator $\widehat{s}_k=\Acal(\mathbf{Y}_k)\in\Lbb_1(\mu)$ of $s$. A classical criterion from decision theory used to measure the quality of a procedure $\Acal$ based on an i.i.d.\ sample of size $k$ when $s$ obtains is its \textit{risk}: $\mathbb{E}_s\croch{\ell(s,\Acal(\mathbf{Y}_k))}$, where $\ell$ is some given \textit{loss function} and $\mathbb{E}_s$ denotes the expectation when $s$ obtains, i.e.\ when the distribution of $\mathbf{Y}_k$ is $P_s^{\otimes k}$. The smaller the risk, the better the procedure $\Acal$.

To define the risk of a procedure one can consider various loss functions. Some popular ones are derived from a \textit{contrast function} $\gamma$ (see \cite[Definition 1]{Birge-Massart1993}) which is a mapping from $\Scal \times \Xcal$ to $\R$ such that $s$ minimizes over $\Scal$ the function $t\mapsto\mathbb{E}_s\croch{\gamma(t,X)}$. The loss $\ell$ at $t$ is then defined as
\begin{equation}\label{eq:contrast-loss}
\ell(s,t)=\mathbb{E}_s\croch{\gamma(t,X)-\gamma(s,X)}\geq 0 \quad \text{for all }t\in \Scal,
\end{equation} 
hence $\ell(s,s)=0$. The $\Lbb_2$-loss derives from the choice $\Scal=\Lbb_2(\mu)\cap\Lbb_1(\mu)$ and 
$\gamma(t,x)=\|t\|^2-2 t(x)$, where $\|t\|=\left[\int_{\Xcal}t^2 d\mu\right]^{1/2}$ denotes the $\Lbb_2$-norm. 
The Kullback-Leibler loss corresponds to the contrast function $\gamma(t,x)=-\log(t(x))$ with $\Scal$ being the set of all probability densities with respect to $\mu$. 

In this paper, we consider the problem of \textit{procedure selection}. Let $(\Acal_{m})_{\mM}$ denote a collection of candidate statistical procedures. Our goal is to choose from the observations $\Xbf$ one of these procedures, that is some $\widehat{m}(\Xbf)\in \Mcal$, in order to have the most accurate estimation of $s$. If we apply all these procedures to the sample $\Xbf$ we get the corresponding collection of estimators $\{\widehat{s}_m=\Acal_m(\Xbf),\,\mM\}$. Given a loss $\ell$, the best possible choice for $m$ would be to select $m^*\in \Mcal$ such that 
\[
\mathbb{E}_s\croch{\ell\left(s,\widehat{s}_{m^*}(\Xbf)\right)}=
\inf_{\mM} \mathbb{E}_s\croch{\ell\left(s,\widehat{s}_{m}(\Xbf)\right)}.
\]
Unfortunately, since $s$ is unknown, all the  risks $\mathbb{E}_s\croch{\ell(s,\widehat{s}_m)}$ are unknown as well and we cannot select the so-called \textit{oracle algorithm} $\Acal_{m^*}$. One can only hope to choose $\widehat{m}=\widehat{m}(\Xbf)$ in such a way that $\mathbb{E}_s\croch{\ell(s,\widehat{s}_{\widehat{m}})}$ is close to $\mathbb{E}_s\croch{\ell(s,\widehat{s}_{m^*})}$.\\

To make this presentation more explicit, let us mention some classical estimation problems that naturally fit into it:
\begin{itemize}
\item \textit{Bandwidth selection} (see\cite[Chapter 11]{Devroye-Lugosi2001}). Let $\Xcal=\R$, $\mu$ 
be the Lebesgue measure, $K:\R\rightarrow \R$ a given nonnegative function satisfying 
$\int_{\Xcal}K(x)\,dx=1$ and $\Hcal=\{h_m, \mM\}$ be a finite or countable set of positive 
bandwidths. We define the \textit{kernel algorithm} $\Acal_m$ as the procedure that produces from any sample $\mathbf{Y}_k$ of size $k$ a kernel density estimator with bandwidth $h_m$, which means that
\[
\Acal_m(\mathbf{Y}_k)(x)=\frac{1}{kh_m}\sum_{Y_i \in\mathbf{Y}_k} K\left(\frac{x-Y_i}{h_m}\right)\quad\mbox{for all }x\in\R.
\]
The problem of choosing a best estimator among the family $\{\widehat{s}_m,\,\mM\}$ amounts to select a ``best'' bandwidth in $\Hcal$, that is one that minimizes the risk $\mathbb{E}_s\croch{\ell(s,\widehat{s}_m)}$ with respect to $m$.
\item \textit{Model selection} (see \cite{Massart2007}). We recall that a \textit{model} $S$ for $s$ 
is any subset of $\Scal$. It follows from \eqref{eq:contrast-loss} that minimizing, for $t$ in $S$, 
the loss $\ell(s,t)$ derived from the contrast function $\gamma$ amounts to minimizing 
$t\mapsto\mathbb{E}_s\croch{\gamma(t,X)}$ over $S$. Since $s$ is unknown, this is impossible but if we replace $\mathbb{E}_s\croch{\gamma(t,X)}$ by its unbiased empirical version: $\gamma_n(t)=n^{-1}\sum_{i=1}^n\gamma(t,X_i)$ we can derive an
estimator with values in $S$ by minimizing $\gamma_n(t)$ over $S$ instead. This procedure $\Acal_S$ is a \textit{minimum contrast algorithm} that provides a \textit{minimum contrast estimator} $\widehat{s}_S(\Xbf)\in\argmin_{t\in S}\gamma_n(t)$ on $S$. Using for instance, the Kullback-Leibler contrast on  a set $S$ of densities leads to the so-called ``maximum likelihood estimator'' on $S$.

If we have at hand some finite or countable collection of models $\{S_m\}_{\mM}$ and a suitable contrast function $\gamma$ we may associate in this way to each model $S_m$ a minimum contrast algorithm $\Acal_m$ and the corresponding minimum contrast estimator $\widehat{s}_m(\Xbf)$. The problem of ``model selection'' is to select from the data a ``best model'' (one with the minimal risk) in the family, leading to a ``best'' possible minimum contrast estimator. 
\end{itemize}

Instead of deriving the loss function $\ell$ from a contrast function we may use for $\ell$ the squared Hellinger distance provided that our estimators $\widehat{s}_m$ are genuine probability densities. We recall that the \textit{Hellinger distance} $h$ and the {\em Hellinger affinity} $\rho$ between two probabilities $P$ and $Q$ defined on $\Xcal$ are given respectively by
\begin{equation}\label{eq:hell-rho}
h(P,Q)=\left[\frac{1}{2}\int{\paren{\sqrt{dP} - \sqrt{dQ}}^{2}}\right]^{1/2}\quad\mbox{ and }\quad
\rho(P,Q)=\int\sqrt{dPdQ}=1-h^2(P,Q),
\end{equation}
where $dP$ and $dQ$ denote the densities of $P$ and $Q$ with respect to any dominating measure (the result being independent of this choice). One advantage of this loss function lies in the fact that 
$h$ is a distance on the set $\Pcal $ of {\em all} probabilities on $\Xcal$ and therefore does not require that $P_s$ be absolutely continuous with respect to $\mu$, which is one of the reasons why we shall use it in the sequel. In this case we take for $\Scal$ a set of probability densities with respect to $\mu$ and we set, for all $t$ in $\Scal$ and $P_t=t\cdot\mu$, $\ell(s,t)=h^2(P_s,P_t)$ which we shall write $h^2(s,t)$ for simplicity. We shall also write $\rho(t,u)$ for $\rho(P_t,P_u)$. This loss then leads to the \textit{quadratic Hellinger risk}. 

\subsection{Cross-validation}\label{sub:I2}
The biggest difficulty for selecting a procedure in a given family $\{\Acal_m,\,\mM\}$ comes from the fact that we use the same data $\Xbf$ to build the estimators $\widehat{s}_m(\Xbf)$ and to evaluate their quality. It is indeed well-known that evaluating the statistical performance of a procedure with the same data that have been used for the construction of the corresponding estimator leads to an overoptimistic result. One solution to avoid this drawback is to save a fraction of the initial sample to test the output of the procedures $\Acal_m$ on it. This is the basic idea behind {\em cross-validation} (CV) which relies on data splitting. 

The simplest CV method is the \textit{hold-out} (HO) which corresponds to a single split of the data. The set $\Xbf$ is divided once and for all into two non-empty proper subsets $\Xbf^{t}$ and $\Xbf^{v}=\Xbf\setminus \Xbf^{t}$ to be called respectively the \textit{training} and the \textit{validation} sample. First, with the training sample $\Xbf^{t}$, we construct a set $\{\Acal_m(\Xbf^{t}),\,\mM\}$ of preliminary estimators. Then, using the validation sample $\Xbf^{v}$, we choose a criterion in order to evaluate the quality of each procedure $\Acal_m$ from the observation of $\Acal_m(\Xbf^{t})$. Finally, we select $\widehat{m}(\Xbf^{v})$ minimizing this criterion over $\Mcal$. Depending on the author, the final estimator might be either $\Acal_{\widehat{m}}(\Xbf^{t})$ (as in \cite{Devroye-Lugosi2001}) or $\Acal_{\widehat{m}}(\Xbf)$ (as in \cite{Arlot-Lerasle2014}). All CV methods are deduced from the HO: instead of using one single partition of our sample, we use different partitions, compute the HO criterion for each one and finally define the CV criterion by averaging all the HO criteria. The goal, by considering several partitions instead of one, is to reduce the variability with the hope that the CV criterion  will lead to a more accurate evaluation of the quality of each procedure.

We shall focus here on V-fold cross-validation (VFCV) which corresponds to a particular set of data splits\footnote{The concerned reader should have a look at the survey of Arlot and Celisse \cite{Arlot-Celisse2010} to get a complete overview of other CV methods.}. One divides the sample $\Xbf$ into $V\ge2$ disjointed and therefore independent subsamples $\Xbf_{j}$, $j=1,\ldots,V$, of the same size $p=n/V$ (assuming, for simplicity, that $p$ is an integer) so that $\Xbf=\bigcup_{j=1}^{V} \Xbf_{j}$.  For each split $j \in\{1,\ldots,V\}$, one uses $\Xbf_{j}^{c}$ to build the family of ``partial estimators'' $\{\widehat{s}_{m,j}=\Acal_{m}(\Xbf_{j}^{c}),\,\mM\}$ and the corresponding validation sample $\Xbf_{j}$ to define an evaluation criterion $\crit_j(m)=\crit_j(m)(\Xbf_{j})$ of the procedure $\Acal_m(\Xbf_{j}^{c})$ corresponding to the partition $(\Xbf_{j},\Xbf_{j}^c)$ of the data. One finally selects a strategy $\widehat{m}_{{\rm VF}}$ minimizing the averaged criterion:
\begin{equation*}
\widehat{m}_{{\rm VF}} \in \underset{\mM}{\operatorname{argmin}} \crit(m)
\qquad\mbox{with}\qquad \crit(m)=\frac{1}{V}\sum_{j=1}^V \crit_j(m).
\end{equation*}

There are as many V-fold procedures as there are different ways to define $\crit_j(m)$. If we work 
with a loss of the type \eqref{eq:contrast-loss}, the best estimator in the family $\{\widehat{s}_{m,j},\,\mM\}$ is the one minimizing the loss, i.e.\ the one minimizing $\mathbb{E}_s\croch{\gamma(\widehat{s}_{m,j},X)}$ (with $X$ being independent of $\Xbf_j^c$). A natural idea for evaluating this quantity that we cannot compute since we do not know $s$ is to estimate it by its empirical version based on the independent sample $\Xbf_j$ of size $p$, which leads to the criterion
\begin{equation*}
\crit_j(m)= \frac{1}{p} \sum_{X_i\in \Xbf_j} \gamma\paren{\widehat{s}_{m,j},X_i}.
\end{equation*}
In this classical context, we naturally select the statistical procedure with the lowest estimated average loss $\crit(m)$.
The choice $\gamma(t,x)=-\log(t(x))$ leads to the Kullback-Leibler V-fold (KLVF) whereas $\gamma(t,x)=\|t\|^2-2 t(x)$ provides the Least-Squares V-fold (LSVF). The chosen estimators will be respectively denoted $\widehat{m}_{{\rm KLVF}}$ and $\widehat{m}_{{\rm LSVF}}$ and the relevant classical criterion will be denoted $\critVFCV$ in  what follows. 

\subsection{An alternative criterion}\label{sub:I3}
When the chosen loss function that we use is the squared Hellinger distance, an alternative empirical criterion to evaluate the quality of an estimator has been proposed by Birg\'e \cite{Birge1983} following ideas of Le Cam \cite{LeCam1973,LeCam1975} to process estimator selection. An alternative method was later introduced by Baraud \cite{Baraud2011}. An HO strategy based on this criterion was first proposed by Birg\'e in \cite{Birge2006b}, this latter procedure being recently implemented in \cite{Magalhaes-Rozenholc2014}. The idea behind the construction is as follows. Suppose we have at hand a set $\Tcal$ of densities with respect to $\mu$ and, for each pair $(t,u)$, $t\ne u$, of points of $\Tcal$, a test $\psi_{t,u}$ between $t$ and $u$ ($\psi_{t,u}=t$ meaning accepting $t$). Given a sample $\Xbf$ we may perform all the tests $\psi_{t,u}(\Xbf)$ and consider the criterion $\Dcal(t)$ defined on $\Tcal$ by
\begin{equation}\label{eq:crit robust}
\Dcal(t)= \sup_{u\in\Tcal,\:u\ne t} h(t,u)\1_{\set{\psi_{t,u}(\Xbf)=u}}.
\end{equation}
It immediately follows from this definition that
\begin{equation}
h(t,u)\le\max\{\Dcal(t),\Dcal(u)\}\quad\mbox{for all }t,u\in\Tcal .
\label{eq:control1}
\end{equation}

This definition means that $\Dcal(t)$ is large when there exists some $u$ which is far from $t$ and which is preferred to $t$ by the test $\psi_{t,u}(\Xbf)$, suggesting that $t$ is likely to be far from $s$, at least if $s$ does belong to $\Tcal$. In order that this be actually true even if $P_s$ does not belong to $\{P_t, t\in\Tcal\}$, it is necessary to design suitable tests. It has been shown in \cite{Birge1983} that one can build a special test $\psi_{t,u}$ between the two Hellinger balls $\Bcal(t,r)$ and $\Bcal(u,r)$ with $r<h(t,u)/2$ (where $\Bcal(t,r)$ denotes the closed ball of center $t$ and radius $r$ in the metric space $(\Pcal,h)$) which posesses the required properties.  With this special choice of tests $\psi_{t,u}$ for all pairs $(t,u)$, $\Dcal(t)$ becomes indeed a good indicator of the quality of $t$ as an estimator of $s$ (the smaller $\Dcal(t)$, the better $t$) and, more generally, of $P_t$ as an estimator of $P_s$ even if $P_s$ is not absolutely continuous with respect to $\mu$. This property of $\Dcal$ suggests to define the following criterion on which to base a new VFCV procedure.
Starting from the family of preliminary density estimators 
\[
\set{\widehat{s}_{m,j}=\Acal_{m}(\Xbf_{j}^{c}),\,\mM,\;1\le j\le V},
\]
we build all the corresponding tests $\psi_{\widehat{s}_{l,j},\widehat{s}_{m,j}}(\Xbf_{j})$, hereafter denoted for simplicity by $\psi_{l,m}(\Xbf_{j})$, between the densities $\widehat{s}_{l,j}$ and $\widehat{s}_{m,j}$ for $l,m\in \Mcal$, $l\ne m$. Then, for each $j$ and $m$, we define the criterion $\crit_j(m)$ by
\begin{equation}\label{eq:crit THO}
\crit_j(m)=\Dcal_j^2(m)\quad\mbox{with}\quad\Dcal_j(m)=
\sup_{l\in \Mcal,\:l\ne m} h(\widehat{s}_{l,j},\widehat{s}_{m,j})\1_{\set{\psi_{l,m}(\Xbf_{j})=l}}.
\end{equation}
We then naturally define our test-based V-fold criterion as 
\begin{equation*}
\critTVF(m)\egaldef \overline{\Dcal}^2(m)= \frac{1}{V} \sum_{j=1}^V \Dcal^2_j(m)\quad\mbox{for all }\mM.
\end{equation*}
Up to our knowledge, this is the first V-fold type procedure based on the Hellinger distance. Note that this construction requires that the estimators $\widehat{s}_{m,j}$ be genuine probability densities with respect to $\mu$ which we shall assume from now on.

\subsection{Organization of the paper}\label{sub:I4}
Our goal is to study our new VFCV procedure from both a theoretical and a practical point of view. 
Section~2 is dedicated to its theoretical study. In Section~3 we discuss in details the implications of the resulting risk bounds to the case of histogram estimators, applications to kernel estimators and an extension to algorithms that do not lead to genuine probability density estimators.
Section~4 contains an empirical study of the influence of the value of $V$ on the performance of our procedure in terms of Hellinger risk and also comparisons with classical V-fold and some especially calibrated procedures. Section~5 describes the fast algorithm that we have designed and implemented in order to compute the selected estimator efficiently. Finally Section~6 contains a proof of the bounds for the Hellinger risk of kernel estimators. We provide some additional simulations in \secref{SuppMat}.
\section{T-V-fold}\label{sec:T}
As already mentioned, the method proposed in \cite{Birge2006b} is based on tests and it results in what Birg\'e called T-estimators (T for ``test''). We shall therefore call our cross-validation method based on the same tests T-V-fold cross-validation (TVF for short). 

\subsection{Tests between Hellinger balls}\label{sub:T1}
The tests that we use for our procedure satisfy the following assumption, which ensures their robustness. We recall that $\Scal$ is the set of all probability densities with respect to $\mu$.

\begin{assumption}[\bf TEST]\label{assum:A-Test}
Let $\theta\in (0,1/2)$ be given. For all $t$ and $u$ in $\Scal$, $z\in \R$ and $r=\theta h(t,u)$ there exists some test statistic $T_{t,u,\theta}(\Xbf)$ depending on $t,u,\theta$ and $\Xbf$ with the following properties. The test $\psi_{t,u}$ between $t$ and $u$ defined by
\begin{equation}\label{eq:generic test}
\psi_{t,u}(\Xbf)=\left\{
\begin{array}{lll}
        t \quad \text{if} \quad T_{t,u,\theta}(\Xbf)>z \\
        u\quad \text{if} \quad T_{t,u,\theta}(\Xbf)<z
        \end{array}
\right.,\quad z\in\R,
\end{equation}
with an arbitrary choice when $T_{t,u,\theta}(\Xbf)=z$, satisfies
\begin{equation}
\sup_{ \set{P_s \in \Pcal\,| \,h(s,t)\le r}} \mathbb{P}_s\croch{\psi_{t,u}(\Xbf)=u}\le\exp\croch{-n(1-2\theta)^2h^{2}(t,u)+z}
\label{eq:robustness1}
\end{equation}
and
\begin{equation}
\sup_{ \set{P_s \in \Pcal\,| \,h(s,u)\le r}} \mathbb{P}_s\croch{\psi_{t,u}(\Xbf)=t}\le\exp\croch{-n(1-2\theta)^2h^{2}(t,u)-z},
\label{eq:robustness2}
\end{equation}
where $\mathbb{P}_s$ denotes the probability that gives $\Xbf$ the distribution $P_s^{\otimes n}$.
\end{assumption}
Any test satisfying \eqref{eq:robustness1} and \eqref{eq:robustness2} will be suitable for our needs. 
\paragraph{Tests between balls}
In order to define tests between two Hellinger balls $\Bcal(t,r)$ and $\Bcal(u,r)$ with $r=\theta h(t,u)$, $0<\theta<1/2$, Birg\'e introduced the following test statistic
\begin{equation}\label{eq:test_birge}
T_{t,u,\theta}(\Xbf)=\sum_{i=1}^n\log\paren{\frac{\sin(\omega (1-\theta)) \sqrt{t}(X_i)+\sin(\omega \theta ) \sqrt{u}(X_i)}{\sin(\omega (1-\theta)) \sqrt{u}(X_i)+\sin(\omega \theta ) \sqrt{t}(X_i)}}\quad\mbox{with }
\omega=\arccos\rho(t,u).
\end{equation}
We should notice that for $\theta=0$, the test given by \eqref{eq:test_birge} is exactly the likelihood ratio test between $t$ and $u$. The fact that Assumption~(TEST) holds for this test whatever $\theta\in(0,1/2)$ has been proven in \cite{Birge1984b} and a more up-to-date version is to be found in \cite[Corollary 1]{Birge2013a}.
\subsection{TVF estimators}\label{sub:T2}
Let $(\Delta_m)_{\mM}$ denote some collection of positive numbers satisfying
\begin{equation} 
\Delta_m \geq 0\quad\mbox{for all }\mM, \qquad \text{and}\qquad\frac{1}{2}\le\Gamma=\sum_{\mM} \exp(-\Delta_m)< \infty. 
\label{eq:Gamma} 
\end{equation} 
Starting from the family of estimators $\widehat{s}_{m,j}$ defined in \subref{I3}, we consider the corresponding tests $\psi_{l,m}(\Xbf_j)=\psi_{\widehat{s}_{l,j},\widehat{s}_{m,j}}(\Xbf_{j})$ with $t= \widehat{s}_{l,j}$, $u= \widehat{s}_{m,j}$ and $z=\Delta_l-\Delta_m$ in \eqref{eq:generic test}.
This results in the estimator $\widehat{s}_{\widehat{m}_{\rm TVF}}$ derived from the procedure $\Acal_{\widehat{m}_{\rm TVF}}$ with
\begin{equation}\label{eq:critere TVF}
\widehat{m}_{\rm TVF} \in \underset{\mM}{\operatorname{argmin}} ~ \overline{\Dcal}^2(m)=
\underset{\mM}{\operatorname{argmin}} ~ \frac{1}{V} \sum_{j=1}^V \Dcal^2_j(m).
\end{equation}

\subsection{Assumption on the family of procedures}\label{sub:T3}
The idea of V-fold relies on the heuristic that, for each procedure $\Acal_m$, the observation of $V$ partial estimators $\widehat{s}_{m,j}$, $1\le j\le V$ based on samples of size $n-p$ with $p=n/V$ allows to predict the behavior of an estimator $\widehat{s}_m$ based on an $n$-sample. This requires that there exists a link between the loss of $\widehat{s}_m$ and the losses of the $\widehat{s}_{m,j}$. We shall need the following assumption on the collection of procedures we consider. 
%
\begin{assumption}[\bf LOSS]\label{assum:A-Loss}
For all procedures $\Acal_m$ with $\mM$, the loss at $s$ satisfies
\begin{equation*}
h^2\paren{s,\widehat{s}_m} \leq \frac{1}{V} \sum_{j=1}^V h^2\paren{s,\widehat{s}_{m,j}}.
\end{equation*}
\end{assumption}
This implies in particular that $R(\Acal_m,n,s)\le R(\Acal_m,n-p,s)$, where 
\[
R(\Acal,k,s)=\mathbb{E}_s\croch{h^2\paren{s,\Acal(\mathbf{Y}_k)\strut}}
\]
denotes the risk at $s$ of the procedure $\Acal$ based on a sample of size $k$. Assumption~(LOSS) is in particular satisfied by the ``additive estimators'' of \cite[Chapter 10]{Devroye-Lugosi2001}.
\begin{definition}
An \textit{additive estimator} $\widehat{s}=\widehat{s}(\Xbf)$ derived from a sample $\Xbf$ of size $n$ is an estimator that can be written in the form:
\begin{equation}\label{eq:linear estim}
\widehat{s}(x)=  \frac{1}{n} \sum_{i=1}^n~\Kcal(x,X_i) \quad\mbox{for all }x\in\Xcal,
\end{equation}
where $\Kcal$ is a real valued function from $\Xcal\times\Xcal$ to $\R$.
\end{definition}
There is a huge amount of literature about these estimators which already appeared in an early version in Whittle \cite{Whittle1958}. The first results about their asymptotic properties in general were made by Watson and Leadbetter \cite{Watson-Leadbetter1965}, followed by Winter \cite{Winter1975} and Walter and Blum \cite{Walter-Blum1979} who established rates (the latter authors called them \textit{delta sequence density estimators}). They were introduced in the context of CV by Rudemo \cite{Rudemo1982} and used by Marron for comparison of CV techniques \cite{Marron1987}. As shown in \cite{Walter-Blum1979} and \cite{Devroye-Lugosi2001}, additive estimators include in particular:
\begin{itemize}
\item \textit{Histogram estimators}. Given a partition $\{I_\lambda,\,\lambda \in \Lambda\}$ of $\Xcal$ with $0<\mu(I_\lambda)<+\infty$ for all $\lambda$ one defines the histogram estimator based on this partition as
\begin{equation}\label{eq:histo}
\widehat{s}(x)=\sum_{\lambda \in \Lambda}\left(\frac{1}{n}\sum_{i=1}^n\mathds{1}_{I_\lambda}(X_i)\right)\frac{\mathds{1}_{I_\lambda}(x)}{\mu(I_\lambda)}.
\end{equation}
It corresponds to the case of $\Kcal(x,X_i)=\sum_{\lambda \in \Lambda}[\mu(I_\lambda)]^{-1}
\mathds{1}_{I_\lambda}(X_i)\mathds{1}_{I_\lambda}(x)$.
\item \textit{Parzen kernel estimators on the line}. Set $\Kcal(x,X_i)=w^{-1}K\paren{w^{-1}(X_i-x)}$ for a given nonnegative kernel $K$ with $\int_\R K(x)\,dx=1$ and a positive bandwidth $w$. Then (\ref{eq:linear estim}) leads to a density estimator with respect to the Lebesgue measure on $\R$.
\end{itemize}
It is straightforward to check that if the procedure $\Acal_m$ results in additive estimators, the following relationship which says that the estimator built with the whole sample is exactly the convex combination of the $V$ partial estimators holds:
\begin{equation}
\widehat{s}_m = \frac{1}{V} \sum_{j=1}^V\widehat{s}_{m,j}.
\label{Eq-linear}
\end{equation}
As a consequence, we get the following elementary property:
\begin{proposition}\label{prop:MEAN}
Any procedure $\Acal_m$ which results in additive estimators does satisfy Assumption~(LOSS).
\end{proposition}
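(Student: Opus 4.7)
The plan is to combine the identity \eqref{Eq-linear}, which identifies $\widehat{s}_m$ with the arithmetic mean of the partial estimators $\widehat{s}_{m,j}$, with the convexity of the squared Hellinger distance in its second argument. Once both ingredients are in place, Assumption (LOSS) is an immediate application of (the discrete form of) Jensen's inequality.

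Concretely, I would first invoke \eqref{Eq-linear} to write $\widehat{s}_m = V^{-1}\sum_{j=1}^V \widehat{s}_{m,j}$; the verification that an additive estimator on the whole sample equals the average of the $V$ additive estimators built on the $\Xbf_j^c$'s is a one-line consequence of \eqref{eq:linear estim} and of the fact that the $\Xbf_j$'s form a partition of $\Xbf$ of common size $p = n/V$ (so each $X_i$ appears in exactly $V-1$ of the training samples $\Xbf_j^c$, and dividing by $n-p = n(V-1)/V$ and then averaging over $j$ gives back the $1/n$ weights).

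Then I would observe that, using \eqref{eq:hell-rho},
\begin{equation*}
h^2(s,t) = 1 - \int \sqrt{s\,t}\,d\mu,
\end{equation*}
so that the functional $t \mapsto h^2(s,t)$ is convex on the convex cone of nonnegative densities: indeed $t \mapsto -\sqrt{t(x)}$ is convex pointwise in $x$ (as the square root is concave), and integration against the nonnegative measure $\sqrt{s}\,d\mu$ preserves convexity. Applying this convexity to the convex combination $\widehat{s}_m = V^{-1}\sum_{j=1}^V \widehat{s}_{m,j}$ yields
\begin{equation*}
h^2\!\left(s,\widehat{s}_m\right) = h^2\!\left(s,\,\tfrac{1}{V}\sum_{j=1}^V \widehat{s}_{m,j}\right) \leq \frac{1}{V}\sum_{j=1}^V h^2\!\left(s,\widehat{s}_{m,j}\right),
\end{equation*}
which is exactly Assumption (LOSS).

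There is no real obstacle: the proof is essentially a one-line verification. The only conceptual point worth highlighting is that the inequality holds \emph{pathwise} (i.e., for every realization of $\Xbf$), not merely in expectation, which is stronger than what the Assumption formally requires. I would keep the proof short, explicitly citing \eqref{Eq-linear} for the linearity step and noting concavity of $\sqrt{\cdot}$ for the convexity of $h^2(s,\cdot)$.
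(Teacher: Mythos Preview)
Your proof is correct and follows essentially the same route as the paper's: both invoke \eqref{Eq-linear} and then use the concavity of the square root to pass through the convex combination. The paper phrases it in terms of the Hellinger affinity $\rho(s,\cdot)$ being concave (so that $\rho(s,\widehat{s}_m)\ge V^{-1}\sum_j\rho(s,\widehat{s}_{m,j})$), while you phrase it as $h^2(s,\cdot)=1-\rho(s,\cdot)$ being convex; these are of course the same inequality.
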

\begin{proof}
It follows from (\ref{Eq-linear}) and the concavity of the square root function that
\begin{equation*}
\rho(s,\widehat{s}_m)=\rho\paren{s,\frac{1}{V} \sum_{j=1}^V\widehat{s}_{m,j}}\ge\frac{1}{V} \sum_{j=1}^V \rho(s,\widehat{s}_{m,j}),
\end{equation*}
which is exactly Assumption~(LOSS) in view of (\ref{eq:hell-rho}).
\end{proof}
\subsection{The main result}
Assumption~(LOSS) ensures that, for the procedures we consider, the loss of some estimator is bounded by the mean of the losses of the partial estimators. This motivates us to work separately on each split $j\in\{1,\ldots,V\}$ and then to deduce a risk bound for the estimator built with the whole sample. It is therefore natural to study for each $j$ the deviations of the random variable $\Dcal_j(\cdot)$. A deviation inequality for $\Dcal$ has been proven in Theorem 9 of \cite{Birge2006b}. Let us now recall it and provide a short proof for the sake of completeness.
\begin{proposition}\label{prop:birge}
Let $(\Delta_m)_{\mM}$ be a collection of weights satisfying \eqref{eq:Gamma} and
\begin{equation*} 
A=\frac{n(1-2\theta)^2}{2V};\qquad y_{m,j}=\max\paren{\frac{h(s,\widehat{s}_{m,j})}{\theta},\sqrt{\frac{\Delta_m}{A}}}.
\end{equation*}
Then, for all $\mM$, and $j\in \{1,\ldots,V\}$,
\begin{equation*} 
\mathbb{P}_s\croch{\Dcal_j(m)\geq y \left|\,\Xbf_{j}^{c}\right.} \le\Gamma
\exp\croch{ -2Ay^2+\Delta_m} \qquad \text{for all } y\geq y_{m,j}.
\end{equation*}
\end{proposition}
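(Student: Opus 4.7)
The plan is to condition on $\Xbf_j^c$, reducing the problem to a fixed-design testing problem on the independent validation sample $\Xbf_j$ of size $p=n/V$, and then to apply a union bound together with Assumption~(TEST).

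First I would fix $j$ and condition on $\Xbf_j^c$. Under this conditioning, the densities $\widehat{s}_{l,j}$ and $\widehat{s}_{m,j}$ are deterministic, and $\Xbf_j$ is an i.i.d.\ sample of size $p=n/V$ drawn from $P_s$. Observing that $p(1-2\theta)^2 = 2A$, Assumption~(TEST) applied to the pair $(t,u)=(\widehat{s}_{l,j},\widehat{s}_{m,j})$ with threshold $z=\Delta_l-\Delta_m$ gives, whenever $h(s,\widehat{s}_{m,j})\le\theta\, h(\widehat{s}_{l,j},\widehat{s}_{m,j})$,
\begin{equation*}
\mathbb{P}_s\croch{\psi_{l,m}(\Xbf_j)=l \,\left|\,\Xbf_j^c\right.} \le \exp\croch{-2A\, h^2(\widehat{s}_{l,j},\widehat{s}_{m,j}) - \Delta_l + \Delta_m}.
\end{equation*}

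Next I would write, by the very definition of $\Dcal_j(m)$ and a union bound,
\begin{equation*}
\mathbb{P}_s\croch{\Dcal_j(m)\ge y\,\left|\,\Xbf_j^c\right.} \le \sum_{l\neq m}\mathbb{P}_s\croch{\psi_{l,m}(\Xbf_j)=l,\ h(\widehat{s}_{l,j},\widehat{s}_{m,j})\ge y\,\left|\,\Xbf_j^c\right.}.
\end{equation*}
For each $l$ I would distinguish two cases. If $h(s,\widehat{s}_{m,j})\le\theta\,h(\widehat{s}_{l,j},\widehat{s}_{m,j})$, the bound above applies and, combined with $h(\widehat{s}_{l,j},\widehat{s}_{m,j})\ge y$, yields the summand $\exp[-2Ay^2 -\Delta_l+\Delta_m]$. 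Otherwise $h(\widehat{s}_{l,j},\widehat{s}_{m,j}) < h(s,\widehat{s}_{m,j})/\theta \le y_{m,j}\le y$, so the indicator $\mathds{1}_{h(\widehat{s}_{l,j},\widehat{s}_{m,j})\ge y}$ vanishes and the corresponding term is zero. This is exactly why the definition of $y_{m,j}$ contains the quantity $h(s,\widehat{s}_{m,j})/\theta$.

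Finally I would sum over $l\ne m$ and use \eqref{eq:Gamma}:
\begin{equation*}
\sum_{l\neq m}\exp\croch{-2Ay^2-\Delta_l+\Delta_m} \le \exp\croch{-2Ay^2+\Delta_m}\sum_{l\in\Mcal}\exp(-\Delta_l) = \Gamma\exp\croch{-2Ay^2+\Delta_m},
\end{equation*}
which is the claimed inequality. The only delicate point is the case split in the union bound: one must verify that the cases in which Assumption~(TEST) does \emph{not} directly apply are precisely those where the geometric constraint $h(\widehat{s}_{l,j},\widehat{s}_{m,j})\ge y$ is automatically violated under the hypothesis $y\ge y_{m,j}$; the other ingredient of $y_{m,j}$, namely $\sqrt{\Delta_m/A}$, is not needed for the inequality itself but ensures that the right-hand side is at most $\Gamma\exp(-\Delta_m)$ in the regime where the bound will later be used.
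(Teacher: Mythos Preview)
Your proof is correct and follows essentially the same approach as the paper: condition on $\Xbf_j^c$, apply a union bound over $l\ne m$, use Assumption~(TEST) via the robustness inequality (with sample size $p=n/V$, giving the factor $2A$) for those $l$ with $h(\widehat{s}_{l,j},\widehat{s}_{m,j})\ge y\ge h(s,\widehat{s}_{m,j})/\theta$, and sum using \eqref{eq:Gamma}. Your explicit case split and the remark that the component $\sqrt{\Delta_m/A}$ of $y_{m,j}$ is not needed for the deviation bound itself (only later when integrating) are accurate refinements of what the paper states more tersely.
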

\begin{proof}
Let us fix some $\mM$ and $j\in \{1,\ldots,V\}$ and work conditionally to the training sample $\Xbf_j^c$ 
so that the collection of estimators $(\widehat{s}_{l,j})_{l\in \Mcal}$ can be considered as fixed. We
perform the test $\psi_{l,m}(\Xbf_{j})$ that satisfy Assumption~(TEST) with $z=\Delta_l-\Delta_m$  in \eqref{eq:robustness1}. Then
\begin{eqnarray*}
\mathbb{P}_s\croch{\Dcal_j(m)\geq y \sachant \Xbf_{j}^{c}}
&=&\mathbb{P}_s\croch{\exists~ l\in \Mcal\mbox{ such that }h(\widehat{s}_{l,j},\widehat{s}_{m,j})\geq y
\mbox{ and }\psi_{l,m}(\Xbf_{j})=l \sachant \Xbf_{j}^{c} }\\
& \le& \sum_{l\in \Mcal:~ h(\widehat{s}_{l,j},\widehat{s}_{m,j})\geq y} \mathbb{P}_s\croch{\psi_{l,m}(\Xbf_{j})=l \sachant \Xbf_{j}^{c}} \\
& \le&\sum_{l\in \Mcal:~ h(\widehat{s}_{l,j},\widehat{s}_{m,j})\geq y} \exp\croch{ -2Ah^2\paren{\widehat{s}_{l,j},\widehat{s}_{m,j}}-\paren{\Delta_l-\Delta_m}}\\\
&\le&\exp\croch{ -2Ay^2+\Delta_m} \sum_{l\in \Mcal} \exp(-\Delta_l)\;\;\le\;\;
\Gamma\exp\croch{ -2Ay^2+\Delta_m},
\end{eqnarray*}
where we successively used the fact that $y\ge y_{m,j}\ge\theta^{-1}h(s,\widehat{s}_{m,j})$
and (\ref{eq:Gamma}).
\end{proof}

For each fixed $j$, that is conditionally to each $\Xbf_{j}^{c}$, we deal with some ``fixed geometrical configuration'' since the points $(\widehat{s}_{m,j})_{\mM}$ are given, conditionally to $\Xbf_{j}^{c}$. On this configuration, Proposition~\ref{prop:birge} controls the deviations of $\Dcal^2_j(m)$ which allows us to bound the expectation of $\overline{\Dcal}^2(m)$. This results in the following theorem. 
\begin{theorem}\label{thm:vfold.unrefined}
Under Assumption~(LOSS), the estimator $\widehat{s}_{\widehat{m}_{\TVF}}=\Acal_{\widehat{m}_{\TVF}}(\Xbf)$ with $\widehat{m}_{\TVF}$ minimizing the criterion $\overline{\Dcal}^2(m)$ satisfies the following inequality:
\begin{equation}
\mathbb{E}_s\croch{h^2\paren{s,\widehat{s}_{\widehat{m}_{\TVF}}} } \le \inf_{\mM} \set{2 \paren{ \frac{\theta^2+2}{\theta^2}}R\left(\Acal_m,\frac{V-1}{V}n,s\right)+
\frac{4V[\Delta_m+\log(2\Gamma)+1]}{n(1-2\theta)^2} }. 
\label{Eq-main}
\end{equation}
\end{theorem}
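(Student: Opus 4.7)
The plan is to bound $h^2(s,\widehat{s}_{\widehat{m}_{\TVF}})$ in three stages: reduce to the partial estimators via Assumption~(LOSS); insert a fixed comparison index $m\in\Mcal$ through a Hellinger triangle inequality so that the resulting cross distance can be controlled by the averaged criterion; and finally bound the expectation of that criterion by integrating the tail inequality of Proposition~\ref{prop:birge}.

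First, Assumption~(LOSS) gives $h^2(s,\widehat{s}_{\widehat{m}_{\TVF}})\le V^{-1}\sum_{j=1}^V h^2(s,\widehat{s}_{\widehat{m}_{\TVF},j})$. For each $j$ and any $m\in\Mcal$, the triangle inequality combined with $(a+b)^2\le 2a^2+2b^2$ yields $h^2(s,\widehat{s}_{\widehat{m}_{\TVF},j})\le 2h^2(s,\widehat{s}_{m,j})+2h^2(\widehat{s}_{m,j},\widehat{s}_{\widehat{m}_{\TVF},j})$, and \eqref{eq:control1} applied conditionally on $\Xbf_j^c$ to the frozen family $\{\widehat{s}_{l,j}\}_{l\in\Mcal}$ bounds the cross term by $\max(\Dcal_j^2(m),\Dcal_j^2(\widehat{m}_{\TVF}))\le\Dcal_j^2(m)+\Dcal_j^2(\widehat{m}_{\TVF})$. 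Averaging over $j$ and using the minimizing property of $\widehat{m}_{\TVF}$, the average of these cross terms is at most $2\overline{\Dcal}^2(m)$. Taking expectation and using that each partial estimator $\widehat{s}_{m,j}$ is built from $n(V-1)/V$ i.i.d.\ observations, so $\mathbb{E}_s[h^2(s,\widehat{s}_{m,j})]=R(\Acal_m,(V-1)n/V,s)$, one obtains
\begin{equation*}
\mathbb{E}_s\!\left[h^2(s,\widehat{s}_{\widehat{m}_{\TVF}})\right]\le 2R\!\left(\Acal_m,\tfrac{V-1}{V}n,s\right)+4\,\mathbb{E}_s\!\left[\overline{\Dcal}^2(m)\right].
\end{equation*}

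The main technical step is bounding $\mathbb{E}_s[\overline{\Dcal}^2(m)]=V^{-1}\sum_j\mathbb{E}_s[\Dcal_j^2(m)]$ via Proposition~\ref{prop:birge}. Working conditionally on $\Xbf_j^c$, I would write $\mathbb{E}_s[\Dcal_j^2(m)\mid\Xbf_j^c]=\int_0^\infty\mathbb{P}_s(\Dcal_j^2(m)\ge u\mid\Xbf_j^c)\,du$ and split the integral at a threshold $u_0\ge y_{m,j}^2$, bounding the integrand by $1$ below $u_0$ and by $\Gamma e^{\Delta_m-2Au}$ above. The crucial trick is to choose $u_0$ near the crossover $(\log\Gamma+\Delta_m)/(2A)$ rather than merely $y_{m,j}^2$: this balances the two contributions and converts the raw tail $\Gamma e^{-\Delta_m}/(2A)$ into the cleaner quantity $(\log(2\Gamma)+1)/(2A)$, producing a bound of the shape $\mathbb{E}_s[\Dcal_j^2(m)\mid\Xbf_j^c]\le h^2(s,\widehat{s}_{m,j})/\theta^2+O\!\left((\Delta_m+\log(2\Gamma)+1)/A\right)$. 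Averaging over $j$ and taking full expectation yields $\mathbb{E}_s[\overline{\Dcal}^2(m)]\le R(\Acal_m,(V-1)n/V,s)/\theta^2+O\!\left((\Delta_m+\log(2\Gamma)+1)/A\right)$.

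Substituting this into the previous display and recalling $A^{-1}=2V/[n(1-2\theta)^2]$, the coefficient of $R(\Acal_m,(V-1)n/V,s)$ combines to exactly $2+4/\theta^2=2(\theta^2+2)/\theta^2$, and the additive term matches the claimed order $4V[\Delta_m+\log(2\Gamma)+1]/[n(1-2\theta)^2]$ once the constants from the tail integration are tracked. Taking the infimum over $m\in\Mcal$ then gives \eqref{Eq-main}. The main obstacle is the careful threshold optimization in the tail integral: the naive choice $u_0=y_{m,j}^2$ leaves a residual $\Gamma e^{-\Delta_m}$ that can be much worse than $\log(2\Gamma)+1$ for small $\Delta_m$, and it is the optimized cutoff that delivers the clean logarithmic dependence on $\Gamma$.
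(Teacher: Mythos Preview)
Your proposal is correct and follows the paper's proof almost verbatim through the reduction to
\[
\mathbb{E}_s\!\left[h^2(s,\widehat{s}_{\widehat{m}_{\TVF}})\right]\le 2R\!\left(\Acal_m,\tfrac{V-1}{V}n,s\right)+4\,\mathbb{E}_s\!\left[\overline{\Dcal}^2(m)\right],
\]
which is exactly the paper's \eqref{Eq-aux1}.

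The one genuine difference lies in how the residual in $\mathbb{E}_s[\overline{\Dcal}^2(m)]$ is turned from $\Gamma e^{-\Delta_m}$ into $\log(2\Gamma)+1$. You propose to optimize the integration threshold $u_0$ directly. The paper instead integrates with the naive cutoff $u_0=y_{m,j}^2$, obtaining
\[
\mathbb{E}_s\!\left[\overline{\Dcal}^2(m)\right]\le \theta^{-2}R\!\left(\Acal_m,\tfrac{V-1}{V}n,s\right)+\frac{\Delta_m+\Gamma e^{-\Delta_m}}{A},
\]
and then observes that replacing every weight $\Delta_l$ by $\Delta_l+B$ (for any $B\ge0$) leaves all the tests $\psi_{l,m}$ unchanged, since they depend only on differences $\Delta_l-\Delta_m$, while $\Gamma$ becomes $\Gamma e^{-B}$. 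This yields the free parameter bound $(\Delta_m+B+\Gamma e^{-\Delta_m-2B})/A$, which is then optimized over $B$. Your threshold optimization and the paper's weight-shifting device are essentially dual manoeuvres and lead to the same final constants; the paper's version has the minor advantage that it sidesteps the random constraint $u_0\ge y_{m,j}^2\ge h^2(s,\widehat{s}_{m,j})/\theta^2$, which your approach must respect and which makes a clean deterministic choice of $u_0$ slightly awkward (one ends up taking $u_0=y_{m,j}^2+c$ and optimizing over $c$ instead).
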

\begin{proof}
Let $m'$ be any point in $\Mcal$. It follows from (\ref{eq:control1}) that, for all $ \mM$ and $1\le j\le V$,
\begin{equation*}
h\paren{s,\widehat{s}_{m',j}} \leq h\paren{s,\widehat{s}_{m,j}} + h\paren{\widehat{s}_{m',j},\widehat{s}_{m,j}} \leq h\paren{s,\widehat{s}_{m,j}}  + \max\paren{\Dcal_j(m),\Dcal_j(m')}.
\end{equation*}
Setting $m'=\widehat{m}_{\TVF}=\widehat{m}$ for short, we derive that
\begin{align*}
\frac{1}{V} \sum_{j=1}^V  h^2\paren{s,\widehat{s}_{\widehat{m},j}}
& \leq 2    \set{\frac{1}{V} \sum_{j=1}^V h^2\paren{s,\widehat{s}_{m,j}} +\frac{1}{V} \sum_{j=1}^V \max\paren{\Dcal^2_j(m),\Dcal^2_j(\widehat{m})}}\\
& \leq 2  \set{\frac{1}{V} \sum_{j=1}^V h^2\paren{s,\widehat{s}_{m,j}} + \frac{1}{V} \sum_{j=1}^V (\Dcal^2_j(m)+\Dcal^2_j(\widehat{m})) }\\
& \leq \frac{2}{V} \sum_{j=1}^V h^2\paren{s,\widehat{s}_{m,j}} + 4 \overline{\Dcal}^2(m),
\end{align*}
for all $\mM$. Using Assumption~(LOSS) and taking expectations, we derive that
\begin{equation}
\mathbb{E}_s\croch{h^2\paren{s,\widehat{s}_{\widehat{m}}}}\le\frac{1}{V} \sum_{j=1}^V
\mathbb{E}_s\croch{h^2\paren{s,\widehat{s}_{\widehat{m},j}}}\le2R(\Acal_m,n-p,s)+
4\mathbb{E}_s\croch{\overline{\Dcal}^2(m)},
\label{Eq-aux1}
\end{equation}
since the risk of $\widehat{s}_{m,j}$ is the same for all $j$ and equal to $R(\Acal_m,n-p,s)$.

Let now $m$ and $j$ be fixed. Integrating the bound for $\mathbb{P}_s\croch{\Dcal_j^2(m)\geq y \sachant \Xbf_{j}^{c}}$ provided by \propref{birge} with respect to $y$ leads to
\[
\mathbb{E}_s\croch{\Dcal_j^2(m) \sachant \Xbf_{j}^{c}} \le y^2_{m,j}+\Gamma e^{\Delta_m}
\int_{y^2_{m,j}}^1e^{-2Az}\,dz\le y^2_{m,j}+\frac{\Gamma e^{\Delta_m}}{A}
\exp\left(-2Ay^2_{m,j}\right)
\]
and, since $Ay^2_{m,j}\ge\Delta_m$,
\[
\mathbb{E}_s\croch{\Dcal_j^2(m)}\le\mathbb{E}_s\croch{y^2_{m,j}}+\Gamma A^{-1}\exp(-\Delta_m)\le\frac{1}{\theta^2}\mathbb{E}_s\croch{h^2\paren{s,\widehat{s}_{m,j}}}+
\frac{\Delta_m+\Gamma e^{-\Delta_m}}{A}.
\]
Finally
\[
\mathbb{E}_s\croch{\overline{\Dcal}^2(m)}
\le\frac{1}{\theta^2}R(\Acal_m,n-p,s)+\frac{\Delta_m+\Gamma e^{-\Delta_m}}{A}.
\]
One should then observe that changing $\Delta_m$ into $\Delta_m+B$ with $B\ge0$
does not change the procedure since the tests only depend on differences 
$\Delta_m-\Delta_l$. Since the new weights $\Delta_m+B$ also satisfy (\ref{eq:Gamma})
with $\Gamma$ changed to $\Gamma e^{-B}$, the previous bound remains valid for the new weights leading to
\[
\mathbb{E}_s\croch{\overline{\Dcal}^2(m)}
\le\frac{1}{\theta^2}R(\Acal_m,n-p,s)+\frac{\Delta_m+B+\Gamma e^{-\Delta_m-2B}}{A}.
\]
An optimization with respect to $B$ (taking into account the fact that $\Gamma\ge1/2$) together with (\ref{Eq-aux1}) leads to our conclusion.
\end{proof}
%
\subsection{Comments\label{com}}
At this stage, several comments are in order:

\paragraph{A simple case}
It is often the case that $\Mcal$ is finite with cardinality $|\Mcal|$ and that we use equal weights 
$\Delta_m=\Delta\le\log(2|\Mcal|)$ for all $\mM$, in which case $\Gamma=
|\Mcal|e^{-\Delta}$ which leads to the following risk bound which only depends on $|\Mcal|$:
\[
\mathbb{E}_s\croch{h^2\paren{s,\widehat{s}_{\widehat{m}_{\TVF}}} } \le2 \paren{\frac{\theta^2+2}{\theta^2}} \inf_{\mM}R\left(\Acal_m,\frac{V-1}{V}n,s\right)+
\frac{4V\log(2e|\Mcal|)}{n(1-2\theta)^2}. 
\]

\paragraph{Modified V-fold}
Unfortunately, there are actually many estimators, like maximum likelihood estimators or T-estimators, that do not satisfy Assumption~(LOSS) and for which the previous risk computations fail. In order to solve this problem, one should think about the initial purpose of VF methods and, more generally, of procedure selection. Starting from the family $\{{\cal A}_m,\,m\in{\cal M}\}$, we want to determine, at least approximately, the best procedure for the problem at hand. But if we design an alternative procedure $\overline{{\cal A}}$ not contained in the initial set, but as good as the best one in the set, we may consider that we have achieved our goal. 

It should be noted at this stage that Assumption~(LOSS) is only used to derive in (\ref{Eq-aux1}) that
\[
\mathbb{E}_s\croch{h^2\paren{s,\widehat{s}_{\widehat{m}}}}\le\frac{1}{V} \sum_{j=1}^V
\mathbb{E}_s\croch{h^2\paren{s,\widehat{s}_{\widehat{m},j}}},
\]
which, in view of Proposition~\ref{prop:MEAN}, holds as soon as $\widehat{s}_{\widehat{m}}= V^{-1}\sum_{j=1}^V\widehat{s}_{\widehat{m},j}$. A natural solution to deal with any family of estimators that do not satisfy Assumption~(LOSS) is therefore as follows. Define the partial estimators $\widehat{s}_{m_j}$ and determine $\widehat{m}_{\rm TVF}$ as before by (\ref{eq:critere TVF}), then define the final TVF-estimator $\widetilde{s}_{\rm TVF}$ by 
\begin{equation}
\widetilde{s}_{\rm TVF}= V^{-1}\sum_{j=1}^V\widehat{s}_{\widehat{m}_{\rm TVF},j}
\label{Eq-modified}
\end{equation}
so that (\ref{Eq-aux1}) is satisfied and the proof proceeds as before; our modified TVF-estimator $\widetilde{s}_{\rm TVF}$ satisfies the conclusion of Theorem~\ref{thm:vfold.unrefined}. 

\paragraph{Extension}
It should be noted that the following analogue of (\ref{Eq-main}) holds (with the same proof)
\[
\mathbb{E}_s\croch{h^2\paren{s,\widehat{s}_{\widehat{m}_{\TVF}}} } \le \inf_{\mM} \set{C_1(\theta,a)R\left(\Acal_m,\frac{V-1}{V}n,s\right)+C_2(\theta,a)
\frac{V(\Delta_m+\log(2\Gamma)+1)}{n} },
\]
if we replace Assumption~(TEST) by the following 
%
\begin{assumption}[\bf TEST']
Let $\theta\in (0,1/2)$ and $a>0$ be given. For all $t$ and $u$ in $\Scal$ and $r=\theta h(t,u)$ there exists some test statistic $T_{t,u,\theta}(\Xbf)$ depending on $t,u,\theta$ and $\Xbf$ with the following properties. The test $\psi_{t,u}$ between $t$ and $u$ defined by
\begin{equation*}
\psi_{t,u}(\Xbf)=\left\{
\begin{array}{lll}
        t \quad \text{if} \quad T_{t,u,\theta}(\Xbf)>z \\
        u\quad \text{if} \quad T_{t,u,\theta}(\Xbf)<z
        \end{array}
\right.,\quad z\in \R,
\end{equation*}
with an arbitrary choice when $T_{t,u,\theta}(\Xbf)=z$, satisfies
\begin{equation*}
\sup_{ \set{P_s \in \Pcal\,| \,h(s,t)\le r}} \mathbb{P}_s\croch{\psi_{t,u}(\Xbf)=u}\le\exp\croch{-nah^{2}(t,u)+z}
\end{equation*}
and
\begin{equation*}
\sup_{ \set{P_s \in \Pcal\,| \,h(s,u)\le r}} \mathbb{P}_s\croch{\psi_{t,u}(\Xbf)=t}\le\exp\croch{-nah^{2}(t,u)-z},
\end{equation*}
where $\mathbb{P}_s$ denotes the probability that gives $\Xbf$ the distribution $P_s^{\otimes n}$.
\end{assumption}
In particular Baraud introduced in \cite{Baraud2011} and for the same purpose of estimator selection the following statistic that relies on a variational formula for the Hellinger affinity. For $r=(t+u)/2$, let
\begin{equation}
T_{t,u}(\Xbf)=\frac{1}{2}\paren{\frac{1}{n}\sum_{i=1}^n \frac{\sqrt{t}(X_i)-\sqrt{u}(X_i)}{\sqrt{r}(X_i)} + \int \paren{\sqrt{t(x)}-\sqrt{u(x)}}\sqrt{r(x)}\,d\mu(x)}.
\label{eq:test_baraud}
\end{equation}
The corresponding test $\psi_{t,u}$ actually satisfies Assumption~(TEST') for small enough constants $\theta$ and $a$. This follows from Baraud (2008, unpublished manuscript). Therefore the test $\psi(t,u)$ derived from Baraud's statistic could be used instead of the tests between balls. Some simulations based on this alternative test will be provided in \secref{SuppMat}.

\section{About the choice of $V$\label{V}}
Let us now come back to the bound (\ref{Eq-main}). It follows from our empirical study in Section~\ref{sub:Influence_theta} below that a good choice of $\theta$ is $1/4$. Therefore assuming, to be specific and for simplicity, that $\theta=1/4$ and that
\begin{equation}
\log(2\Gamma)+1\le3\Delta_m\quad\mbox{for all }\mM,
\label{Eq-gamma}
\end{equation}
(\ref{Eq-main}) becomes
\begin{equation}\label{eq:ccl}
\mathbb{E}_s\croch{h^2\paren{s,\widehat{s}_{\widehat{m}_{\TVF}}} } \le66 \inf_{\mM} \set{R\left(\Acal_m,\frac{V-1}{V}n,s\right)+\frac{V\Delta_m}{n} }. 
\end{equation}
Although this risk bound is certainly far from optimal in view of the large constant 66 and our extended simulations show that the actual risk is indeed substantially smaller, it is nevertheless already enlightening. To see it, let us begin with the simple case of regular histograms.	
%
\subsection{Regular histograms}\label{sect-H}
Let us analyze the problem of estimating an unknown density $s$ with respect to the Lebesgue measure on $[0,1]$ from $n$ i.i.d.\ observations with density $s$. We consider, for each positive integer $m$, the histogram estimator $\widehat{s}_m$ based on the partition $\Ical_m$ of $[0,1]$ into $m$ intervals of equal length $m^{-1}$. It is known from  \cite[Theorem 1]{Birge-Rozenholc2006} that the risk at $s$ of the histogram $\widehat{s}_m$ built from $n$ i.i.d.\ observations is bounded by
\begin{equation}
\mathbb{E}_s\croch{ h^2\paren{s,\widehat{s}_m}} \le h^2\paren{s,\overline{s}_m}+\frac{m-1}{2n},
\label{Eq-histo1}
\end{equation}
where $\overline{s}_m$ is the $\Lbb_2$-projection of $s$ onto the $m$-dimensional linear space of piecewise constant functions on the partition $\Ical_m$. It is also shown in this theorem  that this bound is asymptotically optimal, up to a factor 4, since the asymptotic risk (when $n$ goes to infinity) is of the form
\begin{equation}
\mathbb{E}_s\croch{ h^2\paren{s,\widehat{s}_m}}=h^2\paren{s,\overline{s}_m}+\frac{m-1}{8n}\left(\strut1+o(1)\right).
\label{Eq-histo2}
\end{equation}
In view of (\ref{Eq-histo2}), the bound in (\ref{Eq-histo1}) can be considered as optimal, up to
a constant factor and it follows from (\ref{Eq-histo1}) that
\begin{equation}
R\left(\Acal_m,\frac{V-1}{V}n,s\right)\leq h^2\paren{s,\overline{s}_m}+\frac{(m-1)V}{2n(V-1)}=h^2\paren{s,\overline{s}_m}+\frac{m-1}{2n}+\frac{m-1}{2n(V-1)} 
\label{Eq-histo3}
\end{equation}
and that
\begin{equation}
\inf_{\mM} \mathbb{E}_s\croch{ h^2\paren{s,\widehat{s}_m}} \le h^2\paren{s,\overline{s}_{m^*}}+\frac{(m^*-1)}{2n}=\inf_{\mM} \set{h^2\paren{s,\overline{s}_m}+\frac{(m-1)}{2n}},
\label{Eq-histo6}
\end{equation}
where this last bound can be considered as a benchmark for the risk of any selection 
procedure applied to our family of histograms. Since the Hellinger distance is bounded by 1, it clearly appears that one should restrict to values of $m$ that are not larger than $2n$. We shall therefore now assume that $\Mcal=\{1,2,\ldots,2n\}$.

Applying (\ref{Eq-histo3}) to (\ref{eq:ccl}), we get
\begin{eqnarray}
\frac{1}{66}\mathbb{E}_s\croch{h^2\paren{s,\widehat{s}_{\widehat{m}_{\TVF}}} }
&\le&\inf_{\mM} \set{R\left(\Acal_m,\frac{V-1}{V}n,s\right)+\frac{V\Delta_m}{n}}\label{B0}\\&\le& \inf_{\mM} 
 \set{\left(h^2\paren{s,\overline{s}_m}+\frac{m-1}{2n}\right) +\left(\frac{m-1}{2n(V-1)}
+\frac{V\Delta_m}{n}\right)}\quad\label{B1}\\&\le&\left[h^2\paren{s,\overline{s}_{m^*}}+\frac{(m^*-1)}{2n}\right]+\left[\frac{m^*-1}{2n(V-1)}+\frac{V\Delta_{m^*}}{n}\right]
\label{B2},
\end{eqnarray}
with $m^*$ defined by (\ref{Eq-histo6}).
We see from (\ref{B1}) that, up to the multiplicative constant 66, we have to optimize with 
respect to $m$ a bound for the risk of $\widehat{s}_m$ plus a residual term which depends 
in a non-monotonous way of $V$. The bound (\ref{B2}) shows that, up to a constant factor,
we actually recover our benchmark (\ref{Eq-histo6}) plus an error term which writes
\[
g(V-1)\quad\mbox{with}\quad g(x)=\frac{1}{n}\left(\frac{m^*-1}{2x}+x\Delta_{m^*}\right)+\frac{\Delta_{m^*}}{n}.
\]
Clearly, $g(x)$ is minimum for $x=x_0=\sqrt{(m^*-1)/(2\Delta_{m^*})}$. It follows that the optimal value of $V$ is two if $m^*-1\le2\Delta_{m^*}$. This occurs in particular if $m^*=1$, for instance when $P_s$ is the uniform distribution on $[0,1]$ or close enough to it. It also occurs if $\Delta_{m}\ge(m-1)/2$ for all $m\ge2$. 

Let us now consider the situation for which $m^*-1>2\Delta_{m^*}$ so that $x_0>1$ and the optimal value of $V$ belongs to $(x_0-1,x_0+1)$. If $(m-1)/\Delta_{m}$ is an increasing function of $m$, the optimal value of $V$ will be a non-decreasing function of $m^*$ which, as $m^*$ does, depends on the true unknown value of $s$, large values of $m^*$ leading to large values for $V$ and vice-versa. For instance, the choice of equal weights, $\Delta_m=\log2n$ for $m\in\Mcal$ leads to $\Gamma=1$ which satisfies 
(\ref{Eq-gamma}) and to an optimal $V$ of order $\sqrt{(m^*-1)/(2\log2n)}$. But this choice of $\Delta_m$ is certainly not optimal in view of (\ref{B0}). A better one would be $\Delta_m=(1/3)+2\log m$ which also satisfies (\ref{Eq-gamma}) but improves (\ref{B0}) substantially. Then the optimal value of $V$ is of order $\sqrt{(m^*-1)/((2/3)+4\log m^*)}$, still depending on the true unknown $s$. Only larger values of $\Delta_m$ of the form $\Delta_m=a(m-1)$ for $m\ge2$ that deteriorate the bound (\ref{B0}) and therefore should not be recommended lead to an optimal value of $V$ which is independent of $m^*$, hence of $s$. 
%
\subsection{The typical situation}\label{sect-V2}
A risk bound of the form (\ref{Eq-histo1}) is actually not specific of histograms but actually rather typical. There are many procedures for which the risk, for a convenient choice of the index $m$ and of the set $\Mcal\subset\Bbb{R}$ can be bounded in the following way:
\begin{equation}
\mathbb{E}_s\croch{ h^2\paren{s,\widehat{s}_m}} \le H(s,m)+Cmn^{-1},
\label{Eq-RB}
\end{equation}
where $H$ is a nonincreasing function of $m$, leading to an optimal choice $m^*$ for $m$ (with respect to this bound which we take as a benchmark for the risk) given by 
\[
m^*=\argmin_{\mM} \set{H(s,m)+Cmn^{-1}}.
\]
It then follows from (\ref{eq:ccl}) that we get an analogue of (\ref{B2}), namely
\begin{eqnarray*}
\frac{1}{66}\mathbb{E}_s\croch{h^2\paren{s,\widehat{s}_{\widehat{m}_{\TVF}}} }&\le&\inf_{\mM} 
\set{H(s,m)+\frac{CmV}{n(V-1)}+\frac{V\Delta_m}{n}}\\&\le&\left[H(s,m^*)+\frac{Cm^*}{n}\right]
+\frac{1}{n}\left[\frac{Cm^*}{V-1}+(V-1)\Delta_{m^*}\right]+\frac{\Delta_{m^*}}{n}
\end{eqnarray*}
and we see that the choice of $V$ is driven, as in the case of regular histograms, by the quantity
\begin{equation}
(V-1)^{-1}Cm^*+(V-1)\Delta_{m^*}.
\label{Eq-V}
\end{equation}
The same arguments as before show that the optimal choice of $V$ then depends on the ratio 
$m^*/\Delta_{m^*}$ and therefore on $s$ in many situations. This dependence of the optimal value of $V$ with respect to the true density $s$ will actually be confirmed by our simulations below. A density which is difficult to estimate by a histogram with a few bins will lead to a large value of $m^*$ hence a large optimal $V$ while a simple density, for which $m^*$ is rather small, is better estimated by a V-fold with a small $V$.
In the case of a finite set $\Mcal$, which is the practical one, and of equal weights, which is the simplest but suboptimal choice, the optimal $V$ varies like $m^*$.

\subsection{Kernel estimators\label{sec:ker}}
We consider here estimation of an unknown density $s$ by a kernel estimator $\widehat{s}_w$ using a nonnegative kernel $K$ and a positive bandwidth $w$ which means that
\begin{equation}
\widehat{s}_w(x)=\sum_{i=1}^nK_w(x-X_i)\quad\mbox{with}\quad K_w(y)=w^{-1}K\left(w^{-1}y\right).
\label{Eq-kernest}
\end{equation}
Although there are many papers which study the performance of kernel estimators, in particular their risk with respect to $\Bbb{L}_p$-type losses, we were unable to find a result about their non-asymptotic risk with respect to the squared Hellinger loss. This is why we provide one below, the proof of which is deferred to Section~\ref{P}.
%
\begin{theorem}\label{T-kernel}
Let $s$ be a density on the real line which is supported on an interval of length $2L$ and such that 
$\sqrt{s}$ has an $\Bbb{L}_2$-modulus of continuity  
\begin{equation}
\omega_{2}\left(\sqrt{s},\eta\right)=\sup_{\left\vert z\right\vert \le\eta}
\left\Vert\sqrt{s}\left(\cdot+z\right)-\sqrt{s}\right\Vert=\sqrt{2}\sup_{\left\vert z\right\vert \le\eta}
h\left(s(\cdot+z),s\strut\right).
\label{Eq-modcon}
\end{equation}
Let $\phi$ be a nondecreasing and concave function on $[0,+\infty)$ with $\phi(0)=0$ and 
$\omega_{2}\left(\sqrt{s},\eta\right)\le\phi(\eta)$ for $\eta\ge0$. Assume moreover that the
kernel $K$ is bounded with $\int x^2K(x)\,dx<+\infty$ and that it is ultimately monotone around 
$-\infty$ and $+\infty$. Then the kernel estimator $\widehat{s}_w$ given by (\ref{Eq-kernest}) 
satisfies
\begin{equation}
\Bbb{E}_s\left[h^2\left(\widehat{s}_w,s\right)\right]\le2\left[\int_{\mathbb{R}}\left(1\vee x^{2}\right)K(x)\,dx\right]\phi^{2}(w)+\frac{2L\|K\|_\infty}{nw}+\frac{C(K)}{n},
\label{Eq-kernel risk}
\end{equation}
where the constant $C$ only depends on the kernel $K$ and is equal to 1 when $K$ is unimodal.
\end{theorem}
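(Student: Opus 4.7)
The plan is a bias--variance split for the Hellinger risk. Let $\bar{s}_w:=K_w\ast s=\mathbb{E}_s[\widehat{s}_w]$ be the deterministic mean of the kernel estimator. Triangle inequality for $h$ plus $(a+b)^2\le 2a^2+2b^2$ give
\[
h^2(\widehat{s}_w,s)\le 2h^2(\widehat{s}_w,\bar{s}_w)+2h^2(\bar{s}_w,s),
\]
reducing the problem to controlling the deterministic bias $h^2(\bar{s}_w,s)$ and the expected variance $\mathbb{E}_s[h^2(\widehat{s}_w,\bar{s}_w)]$.

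For the bias, a change of variables yields $\bar{s}_w(x)=\int K(z)s(x-wz)\,dz$, exhibiting $\bar{s}_w$ as a $K$-mixture of translates of $s$. Joint convexity of the squared Hellinger divergence then gives $h^2(\bar{s}_w,s)\le \int K(z)\,h^2(s(\cdot-wz),s)\,dz$. The definition of $\omega_2$ bounds $h^2(s(\cdot-wz),s)\le \tfrac12\phi^2(w|z|)$, while concavity of $\phi$ with $\phi(0)=0$ implies $\phi(w|z|)\le \max(1,|z|)\phi(w)$; combining these gives $h^2(\bar{s}_w,s)\le \tfrac12\phi^2(w)\int(1\vee z^2)K(z)\,dz$, which produces the bias term in (\ref{Eq-kernel risk}).

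For the variance I work pointwise. The elementary inequality $(\sqrt a-\sqrt b)^2\le (a-b)^2/b$ (for $b>0$) combined with $\mathbb{E}_s[\widehat{s}_w(x)]=\bar{s}_w(x)$ gives, at every $x$ with $\bar{s}_w(x)>0$,
\[
\mathbb{E}_s\bigl[(\sqrt{\widehat{s}_w(x)}-\sqrt{\bar{s}_w(x)})^2\bigr]\le \frac{\mathrm{Var}(\widehat{s}_w(x))}{\bar{s}_w(x)}\le \frac{K_{\sup}(x)}{nw},
\]
where $K_{\sup}(x):=\sup\{K(u):u\in[(x-L)/w,(x+L)/w]\}$; the second inequality comes from $\mathrm{Var}(\widehat{s}_w(x))\le n^{-1}\mathbb{E}[K_w(x-X_1)^2]$ and using the support of $s$ to extract one copy of $K$ from $\int K^2(u)s(x-wu)\,du\le K_{\sup}(x)\bar{s}_w(x)$. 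Let $x_0\ge 0$ be such that $K$ is monotone on $(x_0,\infty)$ and on $(-\infty,-x_0)$ (possible by the ``ultimately monotone'' hypothesis, with $x_0=0$ when $K$ is unimodal). I integrate $K_{\sup}(x)/(nw)$ over $\mathbb{R}$ by splitting into three regions: (i) $|x|\le L$, where $K_{\sup}\le\|K\|_\infty$ contributes $2L\|K\|_\infty/(nw)$; (ii) $L<|x|\le L+wx_0$, still bounded by $\|K\|_\infty/(nw)$ and of total length $2wx_0$, contributing at most $2x_0\|K\|_\infty/n$; (iii) $|x|>L+wx_0$, where monotonicity identifies $K_{\sup}(x)$ with the value of $K$ at the endpoint of $[(x-L)/w,(x+L)/w]$ nearer the mode, and the resulting one-dimensional integral changes variable to $(1/n)\int_{|u|>x_0}K(u)\,du\le 1/n$. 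Combining with the bias estimate yields (\ref{Eq-kernel risk}) with $C(K)\le 2x_0\|K\|_\infty+1$ in general; when $K$ is unimodal region (ii) is empty and region (iii) contributes exactly $1/n$, so $C(K)=1$.

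The main obstacle is step (iii): the uniform pointwise bound $K_{\sup}(x)/(nw)\le \|K\|_\infty/(nw)$ is insensitive to $|x|$ and cannot be integrated over $\mathbb{R}$ when $K$ has unbounded support. The ``ultimately monotone'' hypothesis is precisely what allows $K_{\sup}(x)$ to be sharpened in the tails to an integrable function of $x$, producing the additive $C(K)/n$ term alongside the leading $2L\|K\|_\infty/(nw)$ without touching the bias contribution.
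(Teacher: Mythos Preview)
Your proof is correct, and the variance part is essentially identical to the paper's (Lemma~\ref{zvarhel}): both use $(\sqrt a-\sqrt b)^2\le(a-b)^2/b$, extract one factor of $K$ via the compact support of $s$ to get $K_{\sup}(x)$, and then integrate $K_{\sup}$ by splitting into a central window of length $2L$ and tails where ultimate monotonicity makes $K_{\sup}$ integrable. One small slip: you take a symmetric $x_0$ and assert $x_0=0$ when $K$ is unimodal, but unimodality only gives monotonicity on $(-\infty,m_0)$ and $(m_0,\infty)$ for the mode $m_0$, which need not be $0$. With your symmetric split you would get $C(K)=2|m_0|\,\|K\|_\infty+1$ rather than $1$. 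The paper avoids this by using asymmetric thresholds $M_1\le M_2$ (so $M_1=M_2=m_0$ in the unimodal case); replacing your regions by $x\le wM_1-L$, $wM_1-L<x\le wM_2+L$, $x>wM_2+L$ recovers $C(K)=1$ exactly.

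Your bias argument, however, is genuinely different from the paper's and in fact simpler and sharper. The paper (Lemma~\ref{zbias}) compares $D^2=\|\sqrt s-\sqrt{K_w\ast s}\|^2$ with $\Delta^2=\|\sqrt s-K_w\ast\sqrt s\|^2$, bounding $D^2-\Delta^2$ via the variance identity $\|\sqrt{K_w\ast g^2}\|^2-\|K_w\ast g\|^2=\int\operatorname{Var}(g(x-w\xi))\,dx$ and $\Delta^2$ by Jensen, each term giving $\mathbb{E}[\omega_2^2(\sqrt s,w|\xi|)]$ and hence the factor $2$ in \eqref{ebias}. You instead view $K_w\ast s$ as a $K$-mixture of translates of $s$ and invoke convexity of $h^2$ in its first argument (equivalently, concavity of the Hellinger affinity) to get $h^2(K_w\ast s,s)\le\int K(z)\,h^2(s(\cdot-wz),s)\,dz$ directly. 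After the same decoupling $\phi(w|z|)\le(1\vee|z|)\phi(w)$, this yields the bias term with constant $1$ rather than $2$, so your final bound is actually stronger than \eqref{Eq-kernel risk}. The convexity route is shorter and loses nothing; the paper's comparison argument has the merit of also controlling $\|\sqrt s-K_w\ast\sqrt s\|$, which might be of independent interest but is not needed for the theorem.
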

If we restrict to densities $s$ with a known compact support, this bound takes the form (\ref{Eq-RB}) with the choice $m=w^{-1}+1$. A ``classical'' smoothness assumption on $\sqrt{s}$ corresponds to the choice
$\phi(\eta)=M\eta^{\alpha}$, for some exponent $\alpha\in(0,1]$. In this case the smallest quantity $M$ 
such that $\omega_{2}\left(\sqrt{s},\eta\right)\leq\phi(\eta)$ holds true is merely the Besov
semi-norm of $\sqrt{s}$ in the Besov space $B_{2,\infty}^{\alpha}$. In such a case,
we see that the optimal value of $\eta$ is of order $n^{-1/(2\alpha+1)}$, leading to a risk bound of order
$n^{-2\alpha/(2\alpha+1)}$. This is completely analogous to what we get for the squared $\Bbb{L}_2$-risk, apart from the fact that for Hellinger we put the smoothness assumption on $\sqrt{s}$ instead of $s$.

\subsection{Handling arbitrary estimators\label{}}
The previous construction of TVF-estimators is only valid for genuine preliminary density estimators
$\widehat{s}_m$, that is such that $\widehat{s}_m(x)\ge0$ for all $x\in{\cal X}$ and 
$\int\widehat{s}_m(x)\,d\mu(x)=1$, but this is definitely not the case for all classical estimators. For instance additive estimators given by (\ref{eq:linear estim}) do not satisfy these requirements when the function $\Kcal$ may take negative values. This actually happens for projection estimators derived from wavelet expansions or kernel estimators based on kernels that take negative values. Not only TVF-estimators cannot be built from
preliminary estimators that take negative values but the Hellinger distance cannot be defined for such estimators since it involves the square roots of the densities. There is actually a simple and reasonable solution to this problem which is to transform any function $t$ such that $\int_{t>0}t\,d\mu>0$ into a probability density $\pi(t)$ with respect to $\mu$ using the following operator $\pi$:
\begin{equation}
\pi(t)=\frac{t\vee0}{\int(t(x)\vee0)\,d\mu(x)}.
\label{Eq-pi}
\end{equation}
It is clear that for any probability density $s$, $|s(x)-(t(x)\vee0)|\le|s(x)-t(x)|$ so that $t\vee0$ is closer
from $s$ than $t$ for any reasonable distance, including all $\Bbb{L}_p$-distances. Moreover the following lemma shows that $h(s,\pi(t))\le\left\|\sqrt{s}-\sqrt{t\vee0}\right\|$ which justifies the use of the transformation $\pi$ when dealing with the Hellinger distance. 
%
\begin{lemma}\label{L-L_2proj}
Let $f,g$ be two nonegative elements of $\Bbb{L}_2(\mu)$ with $\|f\|=1$ and $\|g\|>0$. Let $\overline{g}=
g/\|g\|$ so that $\|\overline{g}\|=1$. Then
\[
\|f-\overline{g}\|^2\le\frac{4\|f-g\|^2}{4-\|f-\overline{g}\|^2}\le2\|f-g\|^2.
\]
If $s$ is a density with respect to $\mu$, $g$ a nonegative element of $\Bbb{L}_2(\mu)$ with positive norm and $u=(g/\|g\|)^2$, then $u$ is also a density with respect to $\mu$ and
\[
h^2(s,u)\le1-\sqrt{1-\left(\left\|\sqrt{s}-g\right\|^2\wedge1\right)}\le\left\|\sqrt{s}-g\right\|^2\wedge1.
\]
If, in particular, $t$ is an arbitrary element of $\Bbb{L}_1(\mu)$ such that $\int(t\vee0)\,d\mu>0$, then
\[
h^2(s,\pi(t))\le1-\sqrt{1-\left(\left\|\sqrt{s}-\sqrt{t\vee0}\right\|^2\wedge1\right)}\le
\left\|\sqrt{s}-\sqrt{t\vee0}\right\|^2\wedge1.
\]
\end{lemma}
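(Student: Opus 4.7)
The three parts build on one another, so I would prove them sequentially. The first claim is a purely geometric statement comparing projection onto the unit sphere with the ambient distance; the second reinterprets it in Hellinger form; and the third is an immediate specialization.

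For the first claim, the natural parametrization is $c := \langle f, \overline{g}\rangle$, which gives $\|f - \overline{g}\|^2 = 2(1-c)$. Writing $g = \|g\|\,\overline{g}$ and expanding yields $\|f-g\|^2 = (\|g\| - c)^2 + 1 - c^2 \geq 1 - c^2 = (1-c)(1+c)$. Substituting $1 - c = \|f-\overline{g}\|^2/2$ gives $\|f-g\|^2 \geq \|f-\overline{g}\|^2 \, (4 - \|f-\overline{g}\|^2)/4$, and a direct rearrangement produces the first inequality. The second inequality then follows from nonnegativity of $f$ and $g$: it forces $c \geq 0$, hence $\|f-\overline{g}\|^2 = 2(1-c) \leq 2$, so the denominator $4 - \|f-\overline{g}\|^2$ is at least $2$.

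For the second claim, I first observe that $u$ is a density because $\int u\, d\mu = \|g\|^2/\|g\|^2 = 1$, and $\sqrt{u} = g/\|g\| = \overline{g}$ thanks to $g \geq 0$. Taking $f = \sqrt{s}$ in the inequality just proved gives $h^2(s,u) = 1 - \langle \sqrt{s}, \overline{g}\rangle = 1 - c$. The bound $1 - c^2 \leq \|f-g\|^2$ combined with $c \geq 0$ yields $c \geq \sqrt{1 - (\|f-g\|^2 \wedge 1)}$, the truncation $\wedge 1$ simply handling the case $\|f-g\|^2 > 1$, where the estimate $h^2 \leq 1$ is trivial. Hence $h^2(s,u) \leq 1 - \sqrt{1 - (\|\sqrt{s}-g\|^2 \wedge 1)}$, and the final inequality $1 - \sqrt{1-y} \leq y$ on $[0,1]$ is immediate from $(1-y)^2 \leq 1-y$. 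The third claim is then obtained by applying the second to $g = \sqrt{t \vee 0}$: this $g$ is nonnegative, has squared norm $\int(t \vee 0)\,d\mu > 0$ by hypothesis, and $(g/\|g\|)^2$ is precisely $\pi(t)$.

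The only substantive idea is the minimization-over-scale inequality $\|f-g\|^2 \geq 1 - c^2$, combined with the observation that pointwise nonnegativity of $f, g$ forces $c \geq 0$; both are elementary. The main mild obstacle is purely cosmetic: keeping careful track of the truncation $\wedge 1$ so that every square root remains well-defined, and of the identification $\sqrt{u} = g/\|g\|$ (rather than $|g|/\|g\|$), which uses $g \geq 0$ essentially.
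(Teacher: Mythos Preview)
Your argument is correct and follows essentially the same route as the paper's own proof: the paper also writes $g=\lambda\overline{g}$, sets $\rho=\langle f,\overline{g}\rangle\in[0,1]$, minimizes $\|f-g\|^2=1+\lambda^2-2\lambda\rho$ over $\lambda$ to obtain $\|f-g\|^2\ge1-\rho^2$, and then translates this into the Hellinger statements via $h^2(s,u)=1-\rho$ and the same truncation/square-root manipulation. The only cosmetic difference is that the paper phrases the second step as the quadratic inequality $h^2(s,u)\le\|f-g\|^2/(2-h^2(s,u))$ before solving it, whereas you solve directly for $c$; the content is identical.
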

\begin{proof}
Let $\|g\|=\lambda$ so that $g=\lambda\overline{g}$ and let $\rho=\scal{f}{\overline{g}}\in[0,1]$. Then
\[
\|f-g\|^2=1+\lambda^2-2\lambda\rho\qquad\mbox{and}\qquad\|f-\overline{g}\|^2=2(1-\rho)\le2.
\]
It follows that, for a given value of $\rho$, the minimum value of $\|f-g\|^2$ is obtained for
$\lambda=\rho$ and equal to $1-\rho^2$ which implies that
\begin{equation}
\left(\frac{\|f-\overline{g}\|}{\|f-g\|}\right)^2\le\frac{2}{1+\rho}=\frac{4}{4-\|f-\overline{g}\|^2}\le2.
\label{Eq-proj2}
\end{equation}
If $f=\sqrt{s}$, then $\rho=\rho(s,u)=1-h^2(s,u)$ and 
$\|f-\overline{g}\|^2=2h^2(s,u)$, so that (\ref{Eq-proj2}) becomes
\[
h^2(s,u)\le\frac{\|f-g\|^2}{1+\rho(s,u)}=\frac{\|f-g\|^2}{2-h^2(s,u)}
\]
and, since $h(s,u)\le1$, it also follows from elementary calculus that
\[
h^2(s,u)\le1-\sqrt{1-\left(\|f-g\|^2\wedge1\right)}\le\|f-g\|^2\wedge1.
\]
The last inequality is just the case of $g=\sqrt{t\vee0}$.
\end{proof}
Using the transformation $\pi$ amounts to replace the initial family $\{{\cal A}_m,\,m\in{\cal M}\}$ by a new one $\{{\cal A}'_m,\,m\in{\cal M}\}$ via the tranformation ${\cal A}'_m(\mathbf{Y}_k)=\pi\left(\strut{\cal A}_m(\mathbf{Y}_k)\right)$ which results in  procedures that now make sense for the Hellinger loss.
Unfortunately, this transformation does not preserve the linearity so that if we apply this recipe to projection or kernel estimators, we cannot know whether the transformed estimators satisfy Assumption~(LOSS). Nevertheless, as we have seen in Section~\ref{com}, we may change the definition of TVF-estimators to (\ref{Eq-modified}) in order to solve this problem. 

Starting from a family of estimators that are not probability densities, we merely begin
with a preliminary application of the transformation $\pi$, as given by (\ref{Eq-pi}),
and then define our modified TVF-estimator via (\ref{Eq-modified}) so that Theorem~\ref{thm:vfold.unrefined} applies to the family of procedures $\{{\cal A}'_m,\,m\in{\cal M}\}$.
\section{Empirical study}\label{sec:Empirical study}
The theoretical bounds that we have derived, for instance (\ref{eq:ccl}), are quite pessimistic because of the large constants that are present in our risk bounds. It is therefore crucial to know whether such large values are only artifacts or really enter the risk. In order to check the real
quality of our selection procedure and evaluate the influence of the various parameters involved in it, we performed an extensive set of simulations the results of which are summarized below.

\subsection{Simulation protocol}\label{sub:Simulation protocol}
We studied the performances of the TVF procedure on 18 out of the 28 densities described in the \textit{benchden} \footnote{Available on the CRAN \url{http://cran.r-project.org}.} R-package \cite{Mildenberger-Weinert2012} which provides a full implementation of the distributions introduced in \cite{Berlinet-Devroye1994} as benchmarks for nonparametric density estimation. We only show our simulations for the eleven densities in the subset $\Lcal=\{s_i,~ i=1,2,3,4,5,7,12,13,22,23,24\}$ (where the indices refer to the list of benchden) the graphs of which are shown in  \figref{densities}, except for the uniform density $s_1$ on $[0,1]$.
\begin{figure}[h]
\[ \includegraphics[height=3cm]{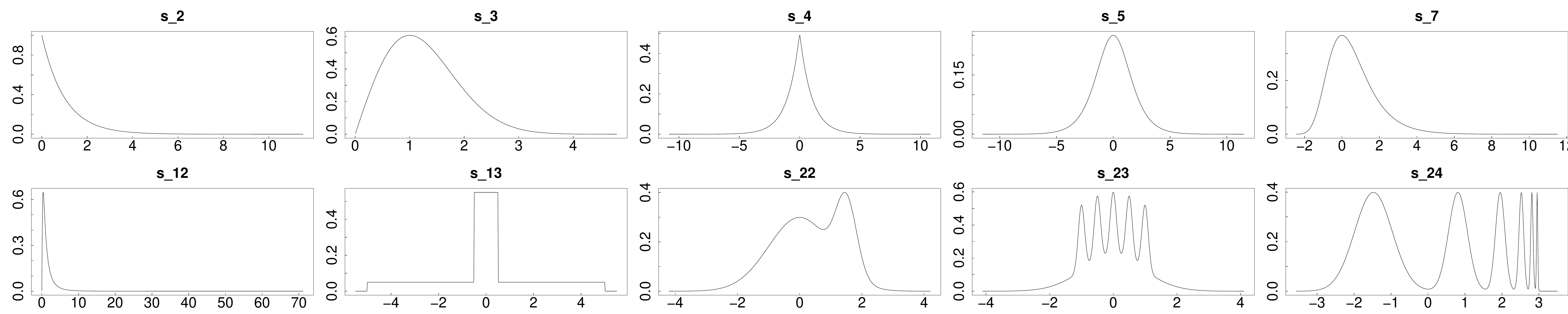} \]
\caption{\label{fig:densities} \small Graphs of all densities mentioned in the paper apart from the uniform.}
\end{figure}
For a given loss $\ell=h^2,d_1$ or $d_2^2$ (respectively the squared Hellinger, $\Lbb_1$- and squared 
$\Lbb_2$-losses), we decided to evaluate the accuracy of some estimator $\widetilde s=\widehat{s}_{\widehat{m}}$ by empirically estimating its risk $R(\widetilde{s},s,\ell)=\mathbb{E}_s\croch{\ell(s,\widetilde{s})}$. To do so, we generated $1000$ pseudo-random samples $\Xbf^i=\{X_1^i,\ldots,X_n^i\}$, $1\leq i \leq 1000$, of size $n$ and density $s$ and approximated $R(\widetilde{s},s,\ell)$ by its empirical version:
\[
\overline{R}_n\paren{\widetilde{s},s,\ell}=\frac{1}{1000} \sum_{i=1}^{1000} \ell\paren{s,\widetilde{s}(\Xbf^i)}.
\]

As in \cite{Magalhaes-Rozenholc2014}, we considered several families of estimators. We present here our simulations for the well-known problems of bandwidth selection for kernel estimators with a Gaussian kernel and the choice of the bin number for regular histograms. We therefore introduce the following families of estimators.
\begin{itemize}
\item $\Fcal_{\rm R}$ is the set of regular histograms with bin number varying from 1 to 
$\lceil n/\log n\rceil$ as described in \cite{Birge-Rozenholc2006},

\item $\Fcal_{\rm K}$ is the set of Gaussian kernel estimators with bandwidths $w_m$ of the form  
\[		
w_m=\frac{1}{n\log n}\paren{1+\frac{1.5}{\log n}}^m,\qquad\text{for }m=1,\ldots,\paren{\log n}^2,
\]

\item $\Fcal_{\rm KR}=\Fcal_{\rm K} \cup \Fcal_{\rm R} $.
\end{itemize}
Besides the classical VF methods, we considered two alternative procedures that are known to perform well in practice in order to have an idea of the performance of the T-V-fold as compared to some especially calibrated methods. When studying the problem of bandwidth selection, we compared the TVF with the unbiased cross-validation selector, implemented in the {\it density} generic function available in R, which provides an estimator which does not belong to the set $\{\widehat{s}_m,\,\mM\}$. When dealing with the bin number choice we implemented the penalization procedure of Birg\'e and Rozenholc (described in \cite{Birge-Rozenholc2006}) which selects a regular histogram in $\Fcal_{\rm R}$. These two competitors will be denoted ``UCV'' and ``BR'' respectively in our study. To implement the TVF and process our simulations we used an algorithm which is described in \secref{Algorithm} with the tests defined in \eqref{eq:test_birge} and constant weights $\Delta_m=\Delta=0$ for all $\mM$. 

We made thousands of simulations (varying the sample size $n$, the density, the family of estimators, the number $V$ of splits in our V-fold procedures, etc.) but since the results we found were very similar in all situations, we only show the conclusion for $n=500$ and $V=2,5,10$ and 20.

\subsection{The influence of the parameter $\theta$}\label{sub:Influence_theta}
 As in \cite[Section 5.1]{Magalhaes-Rozenholc2014}, we have studied the influence on the performance of the TVF procedure of the parameter $\theta$ that is used in the definition of the test statistic \eqref{eq:test_birge}. The parameter influences the performance of the tests $\psi_{t,u}$ as shown by (\ref{eq:robustness1}) and (\ref{eq:robustness2}) and therefore the whole procedure. Since on the one hand $\theta=0$ corresponds to the KLVF and on the other hand 
 $\theta$ must be less than 1/2, we made comparisons between the versions of $\widetilde s_{\rm TVF}$ deduced from the tests with $\theta\in \Theta=\{1/16,1/8,1/4,3/8,7/16\}$. For the sake of clarity and to emphasize the stability of the behavior of the procedure in terms of risk, we present for each $V$ the ratio 
\begin{equation}
\inf_{s\in\Lcal} \set{\inf_{\theta\in \Theta} \overline{R}_n\paren{\widehat{s}_{\widehat{m}(\theta)},s,h^2} \Big/ \sup_{\theta\in \Theta} \overline{R}_n\paren{\widehat{s}_{\widehat{m}(\theta)},s,h^2} }, 
\label{Eq-ratio}
\end{equation}
which gives the largest difference in terms of risk among the densities in $\Lcal$. The closer the ratio to 1, the more stable the procedure with respect to the variations of $\theta$.
\begin{table}[h]
\centering
\small
\begin{tabular}{|l|r|r|r|r|}\hline
family&$V=2$&$V=5$&$V=10$&$V=20$\\\hline
$\Fcal_{\rm R}$&92,95&94,87&96,39&96,96 \\
\hline
\hline
$\Fcal_{\rm K}$&91,31&92,94&94,79&96,44\\
\hline
\hline
$\Fcal_{\rm KR}$&87,81&94,36&97,48&95,15\\
\hline
\end{tabular}
\caption{\label{tab:theta} \small 100 times the ratio (\ref{Eq-ratio}) for $n=500$ and families 
$\Fcal_{\rm R}$, $\Fcal_{\rm K}$ and $\Fcal_{\rm KR}$.}
\end{table}
We may conclude from this picture that $\theta$ has little influence on the quality of the resulting estimator for families $\Fcal_{\rm K}$ and $\Fcal_{\rm R}$, even if we did observe that $\theta=1/16$ is in general slightly worse than the other values (in particular for the family $\Fcal_{\rm R}$). 
Considering family $\Fcal_{\rm KR}$, we have observed that there might be some noticeable difference for $V=2$ for one specific density. 
Nevertheless no clear conclusion can be derived from our simulations as the best value of $\theta$ varies with the setting.
Finally, it appears that the choice $\theta=1/4$ is always satisfactory and should be recommended.
\subsection{About the choice of $V$}\label{sub:Influence_V}

The main question when considering VF type procedures is maybe ``which V is optimal?''  or, more generally, ``what is the influence of $V$ on the quality of the VF procedure?''.  According to our theoretical study in Section~\ref{V} the optimal value of $V$ depends on the optimal value $m^*$ of $m$. In the case of equal weights the best $V$ appears to be an increasing function of $m^*$. In the case of histograms, if the best one has many bins, one should take a large value of $V$ and the same would hold for a kernel estimator with a small bandwidth. To understand what actually happens in practice, we study here how the risk of the chosen estimator behaves when $V$ varies. 

Since $\theta$ has little influence, we made all the simulations with $\theta=1/4$. We also implemented the calibrated procedures BR and UCV described in \subref{Simulation protocol} in order to have a benchmark for the risk for the families $\Fcal_{\rm R}$ and $\Fcal_{\rm K}$ respectively.
\begin{table}[h]
\centering
\small
\begin{tabular}{|l|c|r|r|r|r|r|r|r|r|r|r|r|}
\hline
family&$V$&$s_1$&$s_2$&$s_3$&$s_4$&$s_5$&$s_7$&$s_{12}$&$s_{13}$&$s_{22}$&$s_{23}$&$s_{24}$\\\hline
\multicolumn{1}{|c|}{\multirow{5}{*}{$\Fcal_{\rm R}$}}&$2$&\textbf{2,9}&10,4&9,29&13,8&10,9&11,4	&17,9&14,5	&10,5	&20,8	&27,5\\
\multicolumn{1}{|c|}{}&$5$&4,31	&9,9 &8,75&12,7&10  &10,6&17,3&\textbf{13,5}&9,56&18,4&25,2\\
\multicolumn{1}{|c|}{}&$10$	&6,18&9,81&8,64	&12,3&9,77&10,6	&\textbf{17,2}&13,7&9,51	&\textbf{17,8}&\textbf{24,8}\\
\multicolumn{1}{|c|}{}&$20$	&9,39&\textbf{9,65}&\textbf{8,54}&\textbf{12,2}&\textbf{9,59}&\textbf{10,4}&17,3&14,1&\textbf{9,28}	&17,9&\textbf{24,8}\\\cline{2-13}
\multicolumn{1}{|c|}{}&BR&2,20&9,94&9,27&12,98&10,53&11,14&17,85&14,63&10,37&17,98&25,15\\
\hline
\hline
\multicolumn{1}{|c|}{\multirow{5}{*}{$\Fcal_{\rm K}$}}&$2$	&15,4&29,9&5,67	&5,1 &\textbf{3,56}&4,26	&28,5&20 &3,96	&10,6	&18,1\\
\multicolumn{1}{|c|}{}&$5$	&12,7&25,5&5,06&\textbf{4,95}&3,61	&\textbf{3,98}	&23,4	&18,1	&\textbf{3,86}	&9,28	&16,2\\
\multicolumn{1}{|c|}{}&$10$	&12,4&24,3&\textbf{4,94}&5,01&3,96	&4,04	&21,8	&17,7	&3,91	&9,08	&15,8\\
\multicolumn{1}{|c|}{}&$20$	&\textbf{12,2}&\textbf{23,5}&4,97	&5,41	&4,9 	&4,27	&\textbf{20,9}	&\textbf{17,6}	&4,11	&\textbf{9,05}	&\textbf{15,7}\\\cline{2-13}
\multicolumn{1}{|c|}{}&UCV&15,86&22,20&5,57&6,16&3,74&4,10&18,80&17,16&3,88&9,52&15,91\\
\hline
\hline
\multicolumn{1}{|c|}{\multirow{4}{*}{$\Fcal_{\rm KR}$}}&$2$	&\textbf{2,88}&10,4&8,32&6,35&5,81&6,57&18,5&14,4	&7,3 	&12,8	&20  \\
\multicolumn{1}{|c|}{}&$5$	&4  &9,91&7,86	&\textbf{5,64}&\textbf{5,11}&\textbf{6,06}&17,7&\textbf{13,2}&\textbf{5,76}&9,66	&16,7\\
\multicolumn{1}{|c|}{}&$10$	&4,34&9,95&7,66	&\textbf{5,64}&5,4&6,18&17,6&13,7&5,82&9,12	&16  \\
\multicolumn{1}{|c|}{}&$20$	&4,34&\textbf{9,86}&\textbf{7,49}&5,91&5,81&6,5 &\textbf{17,5}&14,5&5,88&\textbf{9,08}&\textbf{15,7}\\
\hline
\end{tabular}  
\caption{\label{tab:influence_V_Birge} \small $10^3$ times the Hellinger risks of the TVF procedure.}
\end{table}

The empirical results summarized in \tabref{influence_V_Birge} actually confirm what we derived from (\ref{Eq-V}). The quality of the estimation increases with $V$ when
the true density is difficult to estimate which corresponds to an optimal estimator $\widehat{s}_{m^*}$
with a large value of $m^*$ in (\ref{Eq-V}). For a simple density like the uniform $s_1$ which is better estimated by an histogram with few bins, the best choice of $V$ is 2 for the families $\Fcal_{\rm R}$ and $\Fcal_{\rm KR}$ which include histograms. On the contrary, when dealing with the family $\Fcal_{\rm K}$ for which $s_1$ is not easy to estimate, we need to use a larger value of $V$. A similar situation occurs with densities $s_4$, $s_5$, $s_7$ and $s_{22}$ which appears to be easily estimated by a kernel estimator with a large bandwidth but poorly by histograms. It seems that, apart from the exceptional situation of $s_1$, the best value of $V$ is not $2$ and the most significant gain appears between $V=2$ and $V=5$, then the quality sometimes keeps improving from $V=5$ to $V=20$, but with very little difference between $V=10$ and $V=20$.

Interestingly, we also observe that when using the mixed collection $\Fcal_{\rm KR}$ the TVF procedure shows a good adaptation behaviour since it selects the best family in all settings. For instance for $s_5$ it chooses a kernel estimator since these are better than histograms for estimating it, whereas it selects an histogram for $s_2$ for the opposite reason. 

The numerical complexity of the TVF procedure is quite important in practice and increases with $V$ so that large values of $V$ should be avoided because they lead to a much larger computation time. In particular the Leave-one-out ($V=n$) should be excluded since it is typically impossible to compute it in a reasonable amount of time. 
Of course, since the optimal value of $V$, as we have seen, depends on unknown properties of the procedures with respect to the true density it is not possible to practically define an optimal choice of $V$. Nevertheless our empirical study suggests that a good compromise, which leads to both a reasonable computation time and a good performance (apart from some exceptional situations like the estimation of the uniform by histograms), is $V=5$. We would therefore recommend the user to process the TVF procedure with this value.
\subsection{Comparison with others VF procedures}
The goal of this section is to compare our TVF procedure with other general VF procedures namely LSVF and KLVF, which do not depend on the family of estimators from which we estimate $s$. In order to compare two VF procedures $\tilde t_1$ and $\tilde t_2$, we consider the $\log_2$-ratio of their empirical risk, 
\begin{equation*}
\overline{W}_s\paren{\tilde t_1,\tilde t_2,\ell}= \log_2\frac{\overline{R}_n\paren{\tilde t_1,s,\ell}}{\overline{R}_n\paren{\tilde t_2,s,\ell}} .
\end{equation*}
Thus $\overline{W}_s\paren{\tilde t_1,\tilde t_2,\ell}=c$ means that $\overline{R}_n\paren{\tilde t_1,s,\ell}
=2^{c}\times\overline{R}_n\paren{\tilde t_2,s,\ell}$. Hence, for a given density $s$, $\tilde t_2$ is a better estimator than $\tilde t_1$ if $c>0$. In our empirical study, a selection procedure $\tilde t_2$ is thus considered better than $\tilde t_1$ in terms of risk for a given loss function $\ell$ if the values of $\overline{W}_s(\tilde t_1,\tilde t_2,\ell)$ are positive when the density $s$ varies in $\Lcal$. 

Rather than presenting exhaustive results, that is the evaluation of $\overline{W}_s$ for all densities $s$ in $\Lcal$, different loss functions, various observations numbers $n$ and different choices of $V$, we shall illustrate the results of our simulations by showing boxplots of $\{\overline W_s(\tilde t_1,\tilde t_2,\ell),~s\in \Lcal\}$ with the discriminating value zero emphasized in red. We actually observed similar results and behaviours for all losses and all sample sizes and therefore present here only the results for $\ell=h^2$ and $n=500$ for the sake of simplicity. \figref{TVF_Birge vs contrastVF (hellinger)} is built with $\tilde t_1=\widehat{s}_{\widehat{m}_{{\rm LSVF}}}$ (upper line) or $\widehat{s}_{\widehat{m}_{{\rm KLVF}}}$ (bottom line) and $\tilde t_2=\widehat{s}_{\widehat{m}_{\TVF}}$ with $\theta=1/4$.
\begin{figure}[h!]
\[ \includegraphics[height=4cm]{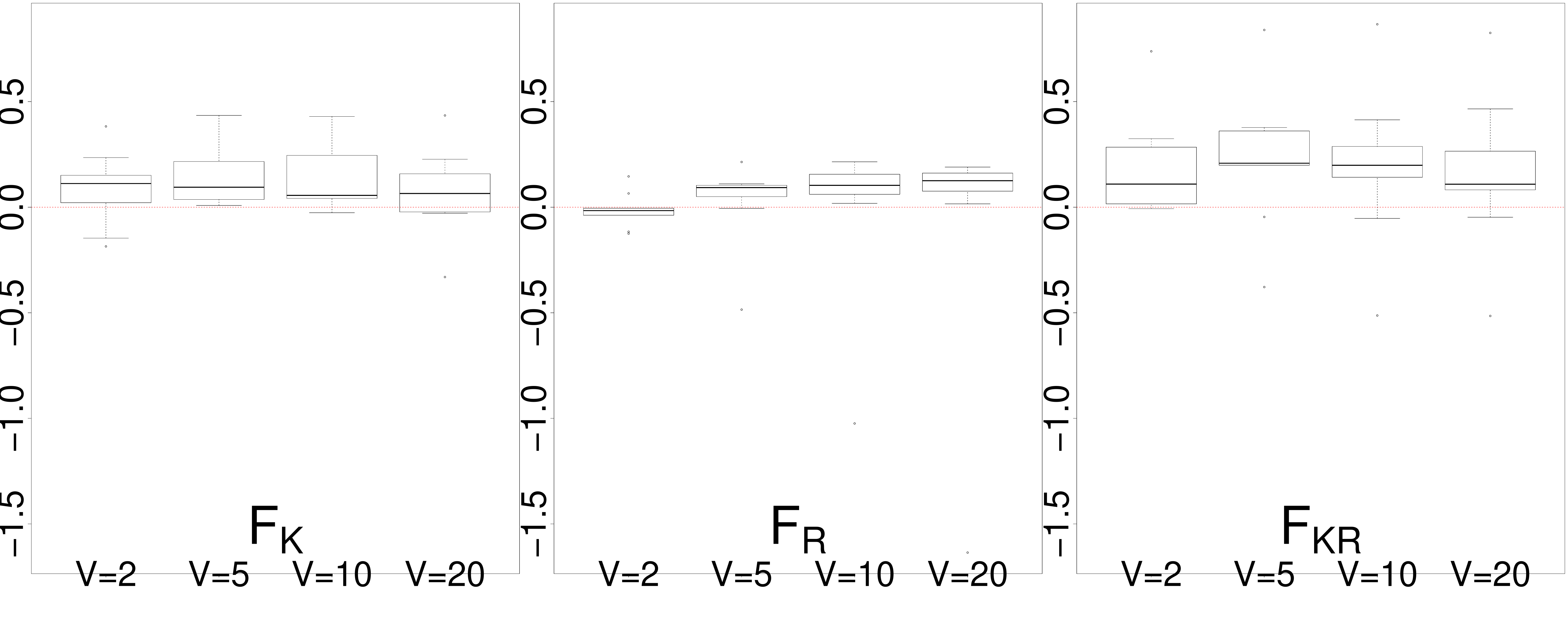} \]
\[ \includegraphics[height=4cm]{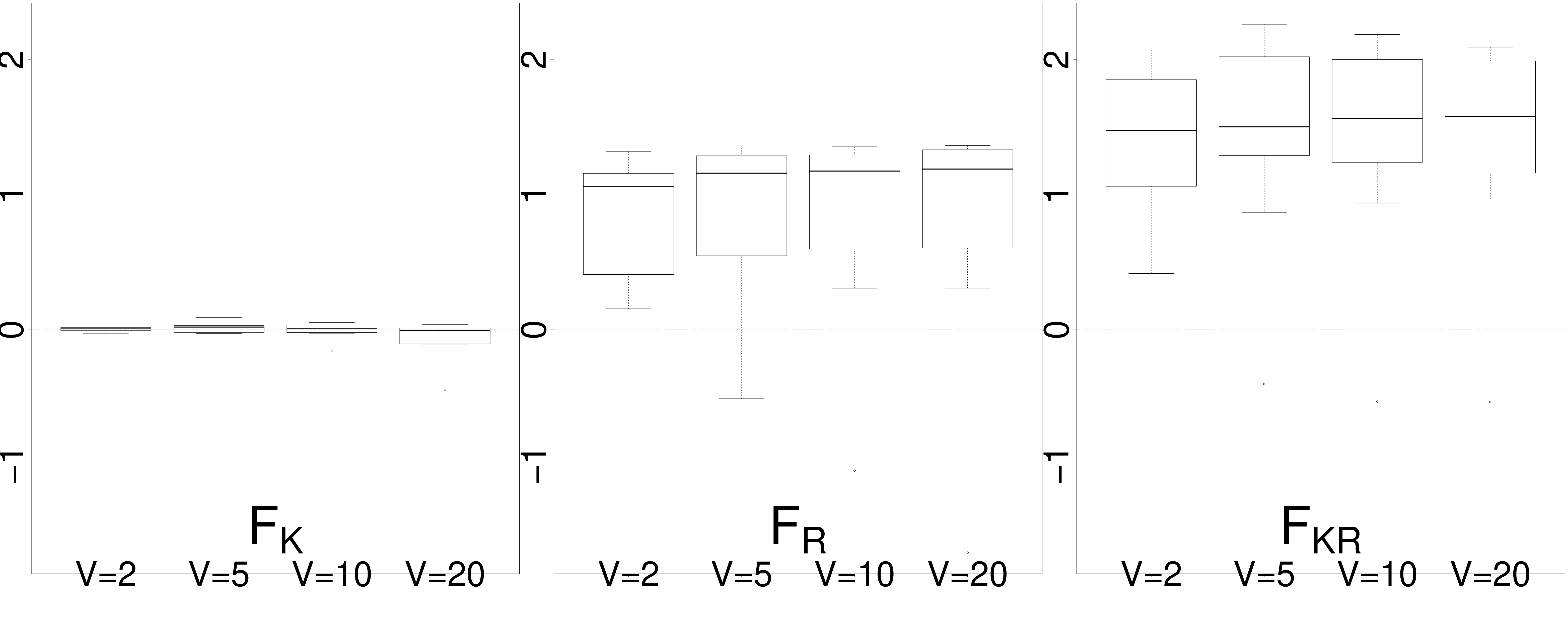} \]
\caption{\label{fig:TVF_Birge vs contrastVF (hellinger)} \small From left to right, the boxplots $\overline{W}_s(\tilde s,\widehat{s}_{\widehat{m}_{\TVF}},h^2)$ using families $\Fcal_{\rm K},\Fcal_{\rm R},\Fcal_{\rm KR}$ (up for $\tilde s=\widehat{s}_{\widehat{m}_{{\rm LSVF}}}$, down for $\tilde s=\widehat{s}_{\widehat{m}_{{\rm KLVF}}}$). Each subfigure shows the boxplots for $V=2,5,10$ and 20. The horizontal red dotted line indicates the reference value 0.}
\end{figure}

In nearly all cases, the median and most of the distribution are positive, which means that the TVF outperforms LSVF (with an average gain of about 20\% for the three families of estimators $\Fcal_{\rm K}$, 
$\Fcal_{\rm R}$ and $\Fcal_{\rm KR}$) and KLVF as well. For the collection $\Fcal_{\rm K}$ we observe that the empirical risks of TVF and KLVF are similar with boxplots of $\overline{W}_s(\widehat{s}_{\widehat{m}_{{\rm KLVF}}},\widehat{s}_{\widehat{m}_{\TVF}},h^2)$ highly concentrated around zero. But
there is a huge difference between TVF and KLVF procedures for families $\Fcal_{\rm R}$ and  $\Fcal_{\rm KR}$  (average gain of about 100\% and 180\% respectively). 
For the uniform density estimated with regular histograms, the estimator derived from our procedure is worse since we found, for both classical VF, $\overline{W}_{s_1}(\tilde s,\widehat{s}_{\widehat{m}_{\TVF}},h^2)<0$ (with an increasing difference with $V$ for $\Fcal_{\rm R}$).
 Finally, let us notice that the difference between TVF and classical VF does not change much with $V$.

\section{Our computational algorithm}\label{sec:Algorithm}
For the practical computation of the TVF as well as any other VF procedure, we assume that $\Mcal$ is finite with cardinality $M$.

Let us compare the complexity of a classical $V$-fold method with ours. Since for every VF method the 
construction of all partial estimators $(\widehat{s}_{m,j})_{1\leq j \leq V, 1\leq m \leq M}$ is required,
we only have to focus on the ``validation part'' which requires to compute all quantities $\Dcal^2_j(m)$ for 
$1\le j \le V$ and $\mM$ and therefore to perform all tests $\psi_{l,m}(\Xbf_{j})$ for $1\le j \le V$ 
and $l,m\in\Mcal$ with $l\ne m$. This means performing $V\times M\times (M-1)/2$ tests leading to a 
computational cost of order $O(V\times M^2)$ that can be prohibitive as compared to the one of 
either LSVF or KLVF which have a maximum complexity of order $O(V\times M)$ (since in this case 
no more than $M$ calculations are needed for each split). For instance, a $10$-fold with $100$ 
different procedures would require at most $1000$ evaluations for a classical 
VF whereas we would need the computation of $49500$ tests for the TVF. It is already huge 
and does not even take into account the computation of the distances $h^2(\widehat{s}_{l,j},
\widehat{s}_{m,j})$, each one requiring the evaluation of an integral. Therefore a ``naive'' 
algorithm based on the computation of all the $V\times M$ values $\Dcal^2_j(m)$ would be very slow.

Fortunately, there is a smarter way to determine which $\widehat{m}$ minimizes $\overline{\Dcal}(\cdot)$
over $\Mcal$. Our algorithm is inspired in some way by the one described in \cite[Section 3]
{Magalhaes-Rozenholc2014}. In order to explain how this ``fast'' algorithm works, it will be 
convenient to single an element of $\Mcal$, that we shall denote by ``$m_s$'',  to serve as a starting 
point for our algorithm which begins with the computation of $\overline{\Dcal}(m_s)$. We store in
$R$ the minimal value of those $\overline{\Dcal}^2(m)$ that have already been computed and in $opt$ the 
corresponding optimal value of $m$ with initial values $opt=m_s$ and $R=\overline{\Dcal}^2(m_s)$. We update them after each computation of a new $\overline{\Dcal}^2(m)$ such that $\overline{\Dcal}^2(m)<R$, then setting $opt:=m$ and $R:=\overline{\Dcal}^2(opt)$ so that $R$ can only decrease during the computational procedure.

By (\ref{eq:critere TVF}), minimizing $\overline{\Dcal}^2(m)$ is equivalent to minimizing $\sum_{j=1}^V \Dcal^2_j(m)$. Since
\[
\Dcal^2_j(m)= \sup_{l\in\Mcal_m}h^2\paren{\widehat{s}_{l,j},\widehat{s}_{m,j}}\1_{\set{\psi_{l,m}(\Xbf)=l}}
\quad\mbox{ with }\quad\Mcal_m=\Mcal\setminus\{m\},
\]
one can compute it iteratively, starting with $\Lcal_j(m)=0$ and setting 
\[
\Lcal_j(m): =\max\paren{\Lcal_j(m),h^2\paren{\widehat{s}_{l,j},\widehat{s}_{m,j}}}\quad\mbox{when}\quad
\psi_{l,m}(\Xbf_j)=l\quad\mbox{for}\quad l\in\Mcal_m.
\]
If $\psi_{l,m}(\Xbf_j)=m$ we can instead update $\Lcal_j(l)$ by $\Lcal_j(l): =\max(\Lcal_j(l),
h^2(\widehat{s}_{l,j},\widehat{s}_{m,j}))$ using the result of the test $\psi_{l,m}(\Xbf_j)$ for the 
calculation of both $\Dcal_j^2(m)$ and $\Dcal_j^2(l)$. Our algorithm proceeds in this way, with a 
set of $M$ $V$-dimensional vectors $\Lcal_\cdot(m)$, $m\in\Mcal$, initially set to zero. The 
updating procedure of $\Lcal_j(m)$ stops when all updates, with $l\in\Mcal_m$, have been 
done (which means that the present value of $\Lcal_j(m)$ is $\Dcal^2_j(m)$) and we 
finally set $\overline{\Dcal}^2(m)=\sum_{j=1}^V\Lcal_j(m)$. 

We also use another trick in order to shorten our computations. Since $\Lcal_j(m)$ can only 
increase during the updating procedure, $\sum_{j=1}^V\Lcal_j(m)$ is, at any time, a lower bound 
for $\overline{\Dcal}^2(m)$, whatever $m\in\Mcal$. Therefore it is useless to go on with the computation of the 
vector $\Lcal_{\cdot}(m)$ if $\sum_{j=1}^V\Lcal_j(m)>R$ since then $\overline{\Dcal}^2(m)\ge\sum_{j=1}^V
\Lcal_j(m)$ cannot minimize the function $\overline{\Dcal}(\cdot)$ over $\Mcal$. Taking this fact into account,
we denote by $\Gcal\subset\Mcal$ the set of all procedures which are potentially ``better'' than the current optimal one stored in $opt$. This means that we store in $\Gcal$ all $m\in\Mcal$ for which 
we do not yet know whether $\overline{\Dcal}^2(m)<R$ or not and each time we find $m$ such that 
$\sum_{j=1}^V\Lcal_j(m)>R$, we remove it from $\Gcal$. We also remove $m$ from $\Gcal$ once
we have computed $\overline{\Dcal}^2(m)$ with $m\in\Gcal$ and then proceed with the computation of some 
new vector $\Lcal_\cdot(l)$ for $l\in\Gcal$ until $\Gcal$ is empty and the algorithm stops with the 
final value $\widehat{m}=opt$.\vspace{2mm}
\begin{algorithm}[H]\footnotesize
\caption{Selection of the TVF estimator}
\SetAlgoLined
\DontPrintSemicolon
\LinesNumbered
\mbox{}\\
\SetKwInput{KwInit}{\underline{Initialization}}
\KwInit{\null\\} 
Set $\Gcal=\Mcal_{m_s}$ and $opt=m_s$\\
\For{$(l\in\Mcal)$}{ 
		
		\For{$(j=1,\ldots,V)$}{
		
				$\Lcal_j(l)=0$
				}
		}\vspace{2mm}
\SetKwInput{KwInit}{\underline{$1$st step}}
\KwInit{\null\\} 
\For{$(l\in\Gcal)$}{
		
				 Compute $\psi_{m_s,l}(\Xbf_{j})$
				
				\eIf{$(\psi_{m_s,l}(\Xbf_{j})=m_s)$}{
					 $\Lcal_j(l)=h^2(\widehat{s}_{l,j},\widehat{s}_{m_s,j})$ 
				}{ 
					 $\Lcal_j(m_s)=\max(\Lcal_j(m_s), h^2(\widehat{s}_{l,j},\widehat{s}_{m_s,j}))$
		}
}
Set $R= \sum_{j=1}^V \Lcal_j(m_s)$ and $\Gcal=\Gcal\setminus 
\{ l\in \Gcal: \sum_{j=1}^V \Lcal_j(l)>R \}$\vspace{2mm}\\
\SetKwInput{KwInit}{\underline{Next steps}}
\KwInit{\null\\} 
\While{$(|\Gcal|>0)$}{
	 Choose $m\in \Gcal$ and set $\Gcal=\Gcal\setminus \{m\}$ \nllabel{alg: jump m}
	 
			\For{$(j=1,\ldots,V)$}{  

					\For{$(l\in\Mcal_m)$}{ 
								Compute $\psi_{m,l}(\Xbf_{j})$  \tcp*[f]{if it has not been done yet}
								
								\If{$(\psi_{m,l}(\Xbf_{j})=m$ and $l\in \Gcal)$}{ 
									 $\Lcal_j(l)=\max(\Lcal_j(l),h^2(\widehat{s}_{l,j},\widehat{s}_{m,j}))$
									 
									 \If{$( \sum_{i=1}^V \Lcal_i(l)>R)$}{ $\Gcal=\Gcal\setminus \{l\}$}
								}
								\If{$(\psi_{m,l}(\Xbf_{j})=l)$}{
										$\Lcal_j(m)=\max(\Lcal_j(m),h^2(\widehat{s}_{l,j},\widehat{s}_{m,j}))$ 
										
										\If{$(\sum_{i=1}^V \Lcal_i(m)>R)$\vspace{1mm}}{{\bf break} \tcp*[f]{quit the two ``for'' loops}}
								}
					}	
			}
	\If{$(\sum_{j=1}^V \Lcal_j(m)<R)$}{Set $opt=m$, $R= \sum_{j=1}^V \Lcal_j(m)$ and 
$\Gcal=\Gcal\setminus \{ l\in \Gcal:  \sum_{j=1}^V \Lcal_j(l)>R \}$}
}
 
\textbf{Return} $opt$
\end{algorithm}
\paragraph{Some important remarks}
\begin{itemize}
\item The algorithm is designed to work with any test procedure $\psi$ which satisfies Assumption (TEST) or, more generally, Assumption (TEST'), like the procedures based on the statistics \eqref{eq:test_birge} or \eqref{eq:test_baraud}.
\item It is important to notice that, at any step, we cannot ``delete'' once and for all the procedures which do not belong to the set $\Gcal$. Even if we do not compute the value of $\overline{\Dcal}$ for these procedures, we still need to test them against the remaining procedures in $\Gcal$.
\item We hoped that by starting from a good initial estimator, only a few procedures would be in the first 
set $\Gcal$, resulting in just a few tests. In the simulations we always started from $m_s=
\widehat{m}_{\rm LSVF}$ since the computation of $\widehat{m}_{\rm LSVF}$ is less costly than the one of $\widehat{m}_{\rm TVF}$ and provides a good starting point. If 
$\overline{\Dcal}(\widehat{m}_{\rm LSVF})=0$ at the first step the algorithm 
stops immediately and the chosen procedure is $\widehat{m}=\widehat{m}_{\rm LSVF}$. In this 
special case, the complexity of our algorithm is the same as the one of the classical approach.
\item Clearly, the choice of $m$ at line \ref{alg: jump m} of the algorithm, as well as the choice of the starting procedure, have no influence on the final estimator, only on the computational time. To avoid a quadratic complexity, we need to ensure that we don't ``jump'' to the worst procedure inside the set $\Gcal$ at each iteration. In our simulations, we chose to jump to the statistical method $k \in \Gcal$ with the lowest temporary criterion among the procedures in $\Gcal$, that is $k= \argmin_{l\in \Gcal} \sum_{j=1}^V \Lcal_j(l)$. We also tried two alternative options: jumping to $k= \argmax_{l\in \Gcal} \sum_{j=1}^V \Lcal_j(l)$ and to the most chosen statistical method $k$ in $\Gcal$ against $m$. Both options lead of course to the same final estimator but were definitely slower. 
\end{itemize}

\section{Proof of Theorem~\ref{T-kernel}}\label{P}
First note that the kernel estimator $\widehat{s}_w$ can be written, according to (\ref{Eq-kernest}),
$K_{w}\ast P_{n}$ where $P_{n}=n^{-1}\sum_{i=1}^{n}\delta_{X_{i}}$ denotes the empirical measure based on the i.i.d.\ sample $X_{1},...,X_{n}$. It then follows from the triangle inequality that
\begin{eqnarray}
\Bbb{E}_s\left[h^2\left(\widehat{s}_w,s\right)\right]&=&\frac{1}{2}\mathbb{E}_s\left\Vert \sqrt{K_{w}\ast P_{n}}-\sqrt{s}\right\Vert^{2}\nonumber\\&\le&\left\Vert \sqrt{s}-\sqrt{K_{w}\ast s}\right\Vert^{2}
+\mathbb{E}_s\left\Vert \sqrt{K_{w}\ast P_{n}}-\sqrt{K_{w}\ast s}\right\Vert^{2},
\label{Eq-hrisk}%
\end{eqnarray}
which is the usual bound of the risk as squared bias plus variance, and we shall bound both terms
successively.

\subsection{Bounding the bias\label{bias}}
It is well known that whenever the function $s$ belongs to $\mathbb{L}_{2}$, the quality of approximation of $s$ by the convolution $K_{w}\ast s$ depends on the modulus
of continuity $\omega_{2}(s,\cdot)$ of $s$ in $\mathbb{L}_{2}$ as given by (\ref{Eq-modcon}).
If we consider the Hellinger distance instead of the $\mathbb{L}_{2}$-distance it is expected that the quality of approximation should rather depend on the the modulus of continuity of $\sqrt{s}$ instead. 
The control of the bias term is provided by the following lemma:
%
\begin{lemma}\label{zbias}
Let $s$ and $K$ be some density functions with respect to
Lebesgue measure on the real line. Let $g=\sqrt{s}$ and assume that
$\omega_{2}(g,\eta)\leq\phi(\eta)$ for every
nonnegative $\eta$ and some nondecreasing and concave function $\phi$ on
$[0,\infty)  $ with $\phi(0)=0$. Then for every positive real number $w$
\begin{equation}
\left\Vert \sqrt{s}-\sqrt{K_{w}\ast s}\right\Vert^{2}\le
2\left[\int_{\mathbb{R}}\left(1\vee x^{2}\right)K(x)\,dx\right]\phi^{2}(w).
\label{ebias}
\end{equation}
\end{lemma}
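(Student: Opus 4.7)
The plan is to reduce the squared Hellinger-type distance $\|\sqrt{s}-\sqrt{K_w\ast s}\|^2$ to the $\Bbb{L}_2$-modulus of continuity of $g=\sqrt{s}$, and then to exploit the concavity of $\phi$ to produce the $(1\vee x^2)$ weight. The pivotal trick is a pointwise Jensen inequality: since $K_w$ is a probability density, $K_w\ast s(x)=\int K_w(y)\,g^2(x-y)\,dy$ is an average of $g^2(x-\cdot)$, and concavity of $\sqrt{\cdot}$ gives the pointwise comparison
\[
\sqrt{K_w\ast s}(x)\ge K_w\ast g(x).
\]

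Using $\|\sqrt{s}\|^2=\|\sqrt{K_w\ast s}\|^2=1$, I would expand
\[
\|\sqrt{s}-\sqrt{K_w\ast s}\|^2=2-2\langle g,\sqrt{K_w\ast s}\rangle\le 2-2\langle g,K_w\ast g\rangle,
\]
the last inequality using $g\ge 0$ together with the previous display. A Fubini calculation then yields the clean identity
\[
2-2\langle g,K_w\ast g\rangle=\int K_w(y)\,\|g-g(\cdot-y)\|^2\,dy,
\]
and by definition of the modulus of continuity and the hypothesis, the integrand is bounded by $\omega_2^2(g,|y|)\le \phi^2(|y|)$.

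After the change of variables $y=wx$ (so $K_w(y)\,dy=K(x)\,dx$), the right-hand side becomes $\int K(x)\,\phi^2(w|x|)\,dx$. The final step is to control $\phi(w|x|)$ through the concavity hypothesis: since $\phi$ is concave with $\phi(0)=0$, the map $t\mapsto\phi(t)/t$ is nonincreasing on $(0,\infty)$, which gives $\phi(w|x|)\le |x|\phi(w)$ for $|x|\ge 1$, while monotonicity gives $\phi(w|x|)\le\phi(w)$ for $|x|\le 1$. Squaring produces $\phi^2(w|x|)\le(1\vee x^2)\phi^2(w)$, and integrating against $K$ delivers (in fact slightly better than) the stated bound~\eqref{ebias}.

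The main obstacle is really the initial Jensen step: to turn an inequality about $\sqrt{s}$ and $\sqrt{K_w\ast s}$, which a priori mixes square roots and convolution in an awkward way, into an inequality that only compares $g$ to its translates. Once that pointwise bound is in hand, the identity reducing $2-2\langle g,K_w\ast g\rangle$ to an integrated modulus of continuity is elementary, and the concavity argument giving the $(1\vee x^2)$ factor is a standard use of the fact that $\phi(t)/t$ is nonincreasing.
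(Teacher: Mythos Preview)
Your proof is correct and in fact sharper than the paper's: you obtain
\[
\left\Vert \sqrt{s}-\sqrt{K_{w}\ast s}\right\Vert^{2}\le
\left[\int_{\mathbb{R}}\left(1\vee x^{2}\right)K(x)\,dx\right]\phi^{2}(w),
\]
without the factor $2$, as you noted.

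The routes differ at the very first step. The paper writes $D^{2}=\|g-\sqrt{K_w\ast g^2}\|^{2}$, $\Delta^{2}=\|g-K_w\ast g\|^{2}$, expands both squares, and uses the Jensen inequality $\sqrt{K_w\ast g^2}\ge K_w\ast g$ only on the cross term, yielding $D^{2}-\Delta^{2}\le\|\sqrt{K_w\ast g^2}\|^{2}-\|K_w\ast g\|^{2}=\int\operatorname{Var}(g(x-w\xi))\,dx$. It then bounds this variance integral and $\Delta^{2}$ separately, each by $\mathbb{E}[\omega_2^2(g,w|\xi|)]$, and adds, producing the factor~$2$. You instead exploit the fact that $K_w\ast s$ is itself a probability density, so that $\|\sqrt{K_w\ast s}\|^{2}=1$; this gives directly $D^{2}=2-2\langle g,\sqrt{K_w\ast s}\rangle\le 2-2\langle g,K_w\ast g\rangle$, and your Fubini identity shows the right-hand side equals $\int K_w(y)\|g-g(\cdot-y)\|^{2}\,dy\le\mathbb{E}[\omega_2^2(g,w|\xi|)]$ in one stroke. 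The concavity endgame is identical in both proofs. Your argument is shorter, avoids the variance detour entirely, and halves the constant; the paper's decomposition, on the other hand, would still work in situations where $\|\sqrt{K_w\ast g^2}\|$ is not known exactly (e.g.\ if $g$ were not assumed to have unit norm), which is perhaps why it was set up that way.
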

\begin{proof}
The key of the proof is to compare $D^{2}=\left\Vert g-\sqrt{K_{w}\ast g^{2}}\right\Vert ^{2}$ with $\Delta
^{2}=\left\Vert g-K_{w}\ast g\right\Vert^{2}$. Our arguments are easier
to explain within a probabilistic framework. Let $\xi$ be some random variable
with density $K$ with respect to the Lebesgue measure. Then the convolution
operator can be written as
\[
(K_{w}\ast g)(x) =\mathbb{E}[g(x-w\xi)] \text{ for all }x\in\mathbb{R}.
\]
From Jensen's inequality we know that
\[
\mathbb{E}[g(x-w\xi)]\leq\sqrt{\mathbb{E}\left[g^{2}(x-w\xi)\right]},
\]
or equivalently $\sqrt{K_{w}\ast g^{2}}\ge K_{w}\ast g$, and a fortiori,
\begin{equation}
\int g(x)\sqrt{\left(K_{w}\ast g^{2}\right)(x)}\,dx\geq\int g(x)(K_{w}\ast g)(x)\,dx.
\label{escalarproduct}
\end{equation}
Expanding the square norms, we derive from (\ref{escalarproduct}) that
\[
D^{2}-\Delta^{2}\leq\left\Vert \sqrt{K_{w}\ast g^{2}}\right\Vert
^{2}-\left\Vert K_{w}\ast g\right\Vert^{2}.
\]
The trick is to notice that
\[
\left\Vert \sqrt{K_{w}\ast g^{2}}\right\Vert^{2}-\left\Vert K_{w}\ast
g\right\Vert^{2}=\int_{\mathbb{R}}\operatorname*{Var}\left(g(x-w\xi)\strut\right)dx.
\]
Now since the computation of the variance is not sensitive to the substraction of a constant
\[
\operatorname*{Var}\left(g(x-w\xi)\strut\right)
=\operatorname*{Var}\left(g(x-w\xi)-g(x)\strut\right)
\leq\mathbb{E}\left[\left(g(x-w\xi)-g(x)\strut\right)^{2}\right]
\]
and Fubini's Theorem implies that
\[
\left\Vert \sqrt{K_{w}\ast g^{2}}\right\Vert^{2}-\left\Vert K_{w}\ast
g\right\Vert^{2}\leq\mathbb{E}\left[\omega_{2}^{2}(g,w|
\xi|)\right].
\]
This achieves the first step of the proof. The second step is straightforward.
We just have to bound $\Delta^{2}$ which is an easy task since
\[
\Delta^{2}=\int_{\mathbb{R}}\left(\strut\mathbb{E}[g(x-w\xi)-g(x)]\right)^{2}dx
\]
implies by Jensen's inequality and Fubini's Theorem that
\[
\Delta^{2}\le\mathbb{E}\left[\int_{\mathbb{R}}\left(g(x-w\xi)
-g(x)\strut\right)^{2}dx\right]\le\mathbb{E}\left[\omega_{2}^{2}(g,w\vert \xi\vert)\right].
\]
Collecting these bounds, we derive that
\[
D^{2}\leq2\mathbb{E}\left[\omega_{2}^{2}(
g,w\vert \xi\vert)\right]\le2\mathbb{E}\left[\phi^{2}(w\vert \xi\vert)\right].
\]
It remains to decouple $w$ and $\xi$ in the last expression above. This can be
done by noticing that the monotonicity and concavity properties of $\phi$
imply that $\phi(w\vert \xi\vert)\le(1\vee\vert \xi\vert)\phi(w)$ and the result follows.
\end{proof}

\subsection{The variance term}
We now turn to the analysis of the variance term of the Hellinger risk of a kernel
estimator. 
\begin{lemma}\label{zvarhel}
Let us denote by $A_{w}$ the Borel set $\{x\in\mathbb{R}\,|\,(K_{w}\ast s)(x)>0\}$, then
\begin{equation}
\mathbb{E}_s\left[\left\Vert \sqrt{K_{w}\ast P_{n}}-\sqrt{K_{w}\ast s}\right\Vert
^{2}\right]\le\frac{1}{nw}\int_{A_{w}}\frac{\left((K^{2})_{w}\ast
s\right)\!(x)}{(K_{w}\ast s)(x)}\,dx.
\label{evar1}
\end{equation}
In particular if $s$ is supported on an interval of finite length $2L$, then
\begin{equation}
\mathbb{E}_s\left[\left\Vert \sqrt{K_{w}\ast P_{n}}-\sqrt{K_{w}\ast s}\right\Vert^{2}\right]\le\frac{1}{nw}
\int\sup_{-L\le z\le L}K\left(\frac{x-z}{w}\right)dx.
\label{evar2}
\end{equation}
If the kernel $K$ is bounded, non-decreasing on $(-\infty,M_1)$ and non-increasing on $(M_2,+\infty)$
with $M_1\le M_2$, then
\begin{eqnarray}
\lefteqn{\int\sup_{-L\le z\le L}K\left(\frac{x-z}{w}\right)dx}\hspace{20mm}\nonumber\\&\le&2L\|K\|_\infty
+w\left[(M_2-M_1)\|K\|_\infty+\int_{-\infty}^{M_1}K(x)\,dx+\int_{M_2}^{\infty}K(x)\,dx\right].\qquad
\label{hvar}
\end{eqnarray}
If, in particular, $K$ is unimodal, then
\[
\mathbb{E}_s\left[\left\Vert \sqrt{K_{w}\ast P_{n}}-\sqrt{K_{w}\ast s}\right\Vert^{2}\right]\le\frac{2L\|K\|_\infty}{nw}
+\frac{1}{n}.
\]

\end{lemma}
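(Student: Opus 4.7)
\medskip

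\textbf{Proof plan for Lemma~\ref{zvarhel}.} The starting point is the pointwise inequality
$(\sqrt{a}-\sqrt{b})^{2}\le(a-b)^{2}/b$ valid for $a\ge 0, b>0$, which, once we take expectations and integrate in $x$, yields
\[
\mathbb{E}_{s}\bigl[\lVert\sqrt{K_w\ast P_n}-\sqrt{K_w\ast s}\rVert^{2}\bigr]
\le\int_{A_w}\frac{\mathrm{Var}_s\!\bigl((K_w\ast P_n)(x)\bigr)}{(K_w\ast s)(x)}\,dx,
\]
because on $A_w^c$ the quantity $K_w\ast s$ vanishes, and by a straightforward Fubini argument so does $K_w\ast P_n$ almost surely. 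Since the $X_i$ are i.i.d., $\mathrm{Var}_s((K_w\ast P_n)(x)) \le n^{-1}\mathbb{E}_s[K_w(x-X_1)^{2}]$, and the identity $K_w(y)^{2}=w^{-1}(K^{2})_w(y)$ (immediate from the definition $K_w(y)=w^{-1}K(y/w)$) gives $\mathbb{E}_s[K_w(x-X_1)^{2}]=w^{-1}((K^{2})_w\ast s)(x)$. This produces (\ref{evar1}).

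For (\ref{evar2}), I would rewrite the ratio that appears in (\ref{evar1}) as
\[
\frac{((K^{2})_w\ast s)(x)}{(K_w\ast s)(x)}=\frac{\int K_w(x-y)\,K\!\bigl((x-y)/w\bigr)\,s(y)\,dy}{\int K_w(x-y)\,s(y)\,dy},
\]
which is a weighted average of $K((x-y)/w)$ against the positive measure $K_w(x-y)s(y)\,dy$ supported in an interval of length $2L$. Hence it is bounded by $\sup_{-L\le z\le L}K((x-z)/w)$, and integrating in $x$ yields (\ref{evar2}).

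For (\ref{hvar}), the change of variables $v=x/w$ reduces the integral to $w\int M(v)\,dv$ with $M(v)=\sup_{|u-v|\le L/w}K(u)$. Here I would split the real line into three regions using the monotonicity assumption on $K$. For $v<M_1-L/w$ the entire window $[v-L/w,v+L/w]$ lies in the non-decreasing part of $K$, so $M(v)=K(v+L/w)$; symmetrically $M(v)=K(v-L/w)$ for $v>M_2+L/w$; on the middle interval of length $M_2-M_1+2L/w$ one uses the crude bound $M(v)\le\|K\|_\infty$. Substituting back and integrating gives (\ref{hvar}). The unimodal case follows by taking $M_1=M_2$, in which case $M_2-M_1=0$ and $\int_{-\infty}^{M_1}K+\int_{M_2}^{\infty}K=1$, so that plugging into (\ref{evar2}) and (\ref{hvar}) yields $(2L\|K\|_\infty+w)/(nw)=2L\|K\|_\infty/(nw)+1/n$.

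The main technical point is the justification of the first pointwise inequality and its integration: one has to argue that the ``sum-then-square-root'' quantity $\sqrt{K_w\ast P_n}$ behaves nicely enough, which is handled cleanly once one restricts to $A_w$ and observes that $(K_w\ast P_n)(x)=0$ a.s.\ outside $A_w$. All the remaining steps are deterministic calculus, with the three-region decomposition for (\ref{hvar}) being the only place that uses the structural assumption on $K$.
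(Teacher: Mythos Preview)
Your proposal is correct and follows essentially the same route as the paper's own proof: the same pointwise inequality $(\sqrt{a}-\sqrt{b})^2\le(a-b)^2/b$ combined with the Fubini argument on $A_w^c$ for (\ref{evar1}), the same weighted-average interpretation of the ratio for (\ref{evar2}), and the same three-region split for (\ref{hvar}) (the paper works directly in the $x$-variable rather than substituting $v=x/w$, but this is purely cosmetic). One minor point of phrasing: your claim that ``$K_w\ast P_n$ vanishes almost surely on $A_w^c$'' is slightly imprecise; what is actually needed and what Fubini gives is $\mathbb{E}_s\bigl[\int_{A_w^c}(K_w\ast P_n)(x)\,dx\bigr]=\int_{A_w^c}(K_w\ast s)(x)\,dx=0$, exactly as the paper writes it.
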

\begin{proof}
Since, for $u,v\ge0$,
\[
(u-v)^2=\left(\sqrt{u}-\sqrt{v}\right)^2\left(\sqrt{u}+\sqrt{v}\right)^2\ge v\left(\sqrt{u}-\sqrt{v}\right)^2,
\]
it follows that
\[
\int\left(\sqrt{u(x)}-\sqrt{v(x)}\right)^2dx\le\int_{v(x)>0}\frac{[u(x)-v(x)]^2}{v(x)}\,dx+\int_{v(x)=0}u(x)\,dx,
\]
hence
\[
\left\Vert \sqrt{K_{w}\ast P_{n}}-\sqrt{K_{w}\ast s}\right\Vert^{2}\le\int_{A_w}\frac{\left[(K_{w}\ast P_{n})(x)-(K_{w}\ast s)(x)\right]^2}{(K_{w}\ast s)(x)}\,dx+\int_{A_w^c}(K_{w}\ast P_{n})(x)\,dx.
\]
By Fubini and the definition of $A_w$,
\[
\Bbb{E}_s\left[\int_{A_w^c}(K_{w}\ast P_{n})(x)\,dx\right]=\int_{A_w^c}\mathbb{E}_s\left[(K_{w}\ast P_{n})(x)\right]dx=\int_{A_w^c}(K_{w}\ast s)(x)\,dx=0.
\]
Taking expectations and using Fubini again, we therefore get
\begin{eqnarray*}
\mathbb{E}_s\left[\left\Vert \sqrt{K_{w}\ast P_{n}}-\sqrt{K_{w}\ast s}\right\Vert^{2}\right]&\le&
\int_{A_w}\frac{\mathbb{E}_s\left[\left[(K_{w}\ast P_{n})(x)-K_{w}\ast s(x)\right]^2\right]}{(K_{w}\ast s(x)}\,dx
\\&=&\int_{A_w}\frac{\operatorname*{Var}\left(\strut(K_{w}\ast P_{n})(x)\right)}{K_{w}\ast s(x)}\,dx
\end{eqnarray*}
and (\ref{evar1}) follows since
\begin{equation}
\operatorname*{Var}\left(\strut(K_{w}\ast P_{n})(x)\right)
=\frac{\operatorname*{Var}\left(\strut K_{w}(x-X)\right)}{n}
\leq\frac{\Bbb{E}_s\left[\left(\strut K_{w}(x-X)\right)^2\right]}{n}=\frac{\left((K^{2})_{w}\ast s\right)\!(x)}{nw}.
\label{evarx}
\end{equation}
Now observe that if $f$ is supported on $[a,a+2L]$,
\[
\left((K^{2})_{w}\ast s\right)\!(x)=\int_a^{a+2L}\frac{1}{w}K^2\left(\frac{x-z}{w}\right)s(z)\,dz
\le\sup_{a\le z\le a+2L}K\left(\frac{x-z}{w}\right)(K _{w}\ast s)(x),
\]
so that by (\ref{evar1})
\[
\mathbb{E}_s\left[\left\Vert \sqrt{K_{w}\ast P_{n}}-\sqrt{K_{w}\ast s}\right\Vert
^{2}\right]\le\frac{1}{nw}\int\sup_{a\le z\le a+2L}K\left(\frac{x-z}{w}\right)dx.
\]
Since
\[
\int\sup_{a\le z\le a+2L}K\left(\frac{x-z}{w}\right)dx=\int\sup_{-L\le y\le L}K\left(\frac{x-a-L-y}{w}\right)dx
=\int\sup_{-L\le y\le L}K\left(\frac{v-y}{w}\right)dv,
\]
(\ref{evar2}) follows. 

If $K$ is nonincreasing on $(M_2,+\infty)$ and $x>M_2w+L$, then
$\sup_{-L\le y\le L}K\left(w^{-1}(x-y)\right)=K\left(w^{-1}(x-L)\right)$ and
\[
\int_{M_2w+L}^{\infty}\,\sup_{-L\le y\le L}K\left(\frac{x-y}{w}\right)dx=
\int_{M_2w+L}^{\infty}K\left(\frac{x-L}{w}\right)dx=w\int_{M_2}^{\infty}K(y)\,dy.
\]
Similarily,
\[
\int_{-\infty}^{M_1w-L}\sup_{-L\le y\le L}K\left(\frac{x-y}{w}\right)dx=
w\int_{-\infty}^{M_1}K(y)\,dy
\]
and finally
\[
\int\sup_{-L\le y\le L}K\left(\frac{x-y}{w}\right)dx\le(M_2w+L-M_1w+L)\|K\|_\infty+
w\int_{-\infty}^{M_1}K(x)\,dx+w\int_{M_2}^{\infty}K(x)\,dx,
\]
which is (\ref{hvar}). The unimodal case immediately follows from (\ref{evar2}) and (\ref{hvar}) with $M_1=M_2$.
\end{proof}
\paragraph{Acknowledgments} 
One  author thanks Mathieu Sart for his helpful comments on an earlier version of the paper.

\bibliographystyle{plain} 
\bibliography{biblio} 

\appendix
\section{Supplementary material}\label{sec:SuppMat}
We provide here additional simulations about the TVF based on the test statistic $T_{t,u}(\Xbf)$ designed by Baraud in\cite{Baraud2011} and given by \eqref{eq:test_baraud}. As in \secref{Empirical study}, we study the influence of $V$ and we compare the TVF based on this test with classical VF procedures. The results are summarized in Table~\ref{tab:influence_V_Baraud} and Figure~\ref{fig:TVF_Baraud vs contrastVF (hellinger)} which are the analogues of Table~\ref{tab:influence_V_Birge} and Figure~\ref{fig:TVF_Birge vs contrastVF (hellinger)} respectively.\vspace{2mm}
\begin{table}[h]
\centering
\small
\begin{tabular}{|l|c|r|r|r|r|r|r|r|r|r|r|r|}
\hline
family&$V$&$s_1$&$s_2$&$s_3$&$s_4$&$s_5$&$s_7$&$s_{12}$&$s_{13}$&$s_{22}$&$s_{23}$&$s_{24}$\\\hline
\multicolumn{1}{|c|}{\multirow{5}{*}{$\Fcal_{\rm R}$}}&$2$&\textbf{2,89}&9,97&9,07&13,2&10,5	&11  &17,5&14,7	&10,3	&19,9	&26,9\\
\multicolumn{1}{|c|}{}&$5$	&4,33&9,68	&8,61	&12,4&9,87	&10,4&17,1	&\textbf{13,4}	&9,37	&17,8	&24,7\\
\multicolumn{1}{|c|}{}&$10$	&6,13&9,65	&8,56	&12,1&9,65	&10,4&17  	&13,7	&9,36	&17,5	&\textbf{24,3}\\
\multicolumn{1}{|c|}{}&$20$	&9,28&\textbf{9,47}	&\textbf{8,4}&\textbf{12}&\textbf{9,36}	&\textbf{10,3}&\textbf{16,9}	&14,2	&\textbf{9,17}	&\textbf{17,4}	&24,6\\\cline{2-13}
\multicolumn{1}{|c|}{}&BR&2,20&9,94&9,27&12,98&10,53&11,14&17,85&14,63&10,37&17,98&25,15\\
\hline
\hline
\multicolumn{1}{|c|}{\multirow{5}{*}{$\Fcal_{\rm K}$}}&$2$&15,6&29,4&5,69&5,07&\textbf{3,55}	&4,24&27,2	&20 &3,97&10,3&18  \\
\multicolumn{1}{|c|}{}&$5$&13,2&25,7&5,1&\textbf{4,94}&3,58	&\textbf{3,97}&23 	&18,1	&\textbf{3,85}	&9,18	&16,2\\
\multicolumn{1}{|c|}{}&$10$&12,9&24,8&5  &5,02&3,86	&4,01&22,2	&17,7	&3,87	&9,04	&\textbf{15,8}\\
\multicolumn{1}{|c|}{}&$20$&\textbf{12,7}&\textbf{24,4}&\textbf{4,98}&5,28&4,54	&4,1 &\textbf{21,6}	&\textbf{17,6}&3,98	&\textbf{8,98}&\textbf{15,8}\\\cline{2-13}
\multicolumn{1}{|c|}{}&UCV&15,86&22,20&5,57&6,16&3,74&4,10&18,80&17,16&3,88&9,52&15,91\\
\hline
\hline
\multicolumn{1}{|c|}{\multirow{4}{*}{$\Fcal_{\rm KR}$}}&$2$&\textbf{2,87}&10&7,47&5,88&5,04&5,6&18,9&14,7	&6,38	&11,6	&19,1\\
\multicolumn{1}{|c|}{}&$5$&3,68&\textbf{9,77}&6,81&\textbf{5,48}&\textbf{4,64}&\textbf{5,19}&17,7&\textbf{13,3}&\textbf{5,01}&9,3&16,4\\
\multicolumn{1}{|c|}{}&$10$&3,58&9,84&6,71&5,53&4,99&5,26&\textbf{17,6}	&13,7&5,11&9,04&15,9\\
\multicolumn{1}{|c|}{}&$20$&3,79&9,84&\textbf{6,45}	&5,65&5,31	&5,83&\textbf{17,6}	&14,6&5,22&\textbf{9,01}&\textbf{15,7}\\
\hline
\end{tabular}  
\caption{\label{tab:influence_V_Baraud} \small 1000 times Hellinger risks for the TVF procedure based on Baraud's test.}
\end{table}

\begin{figure}[h!]
\[ \includegraphics[height=3cm]{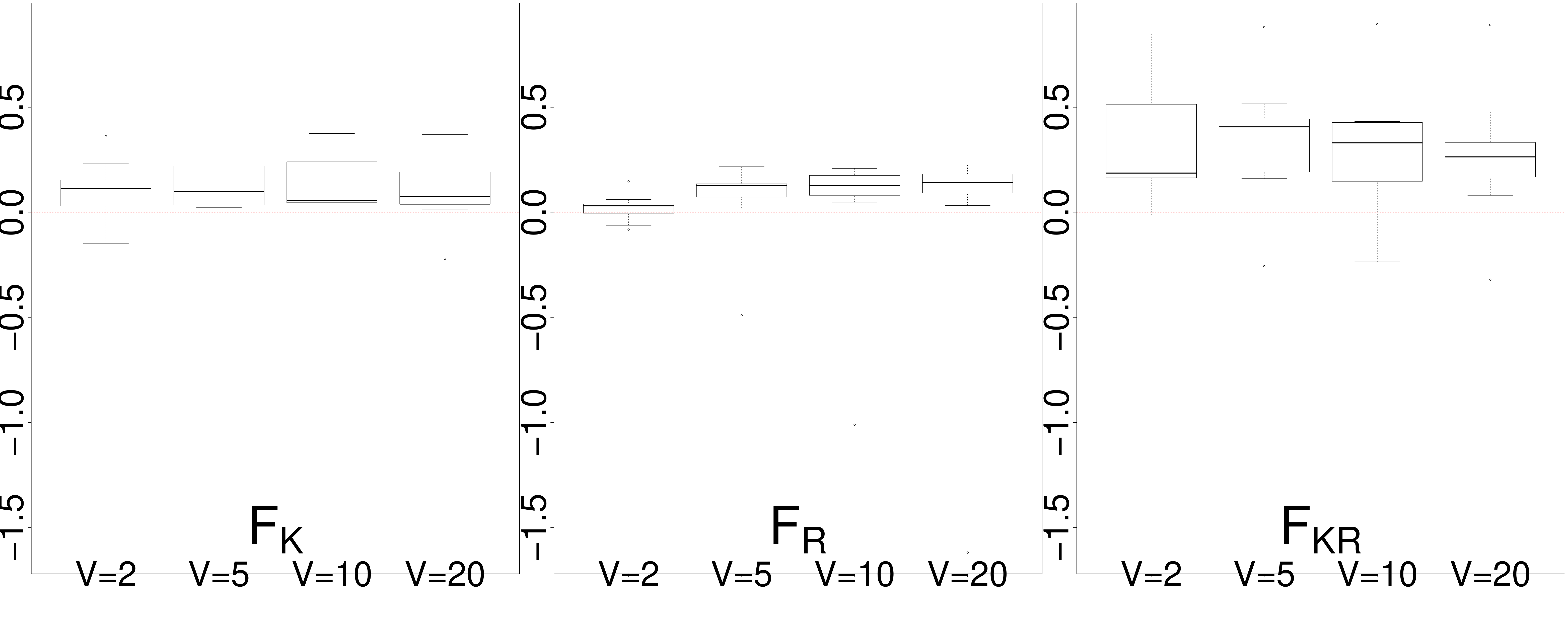} \]
\[ \includegraphics[height=3cm]{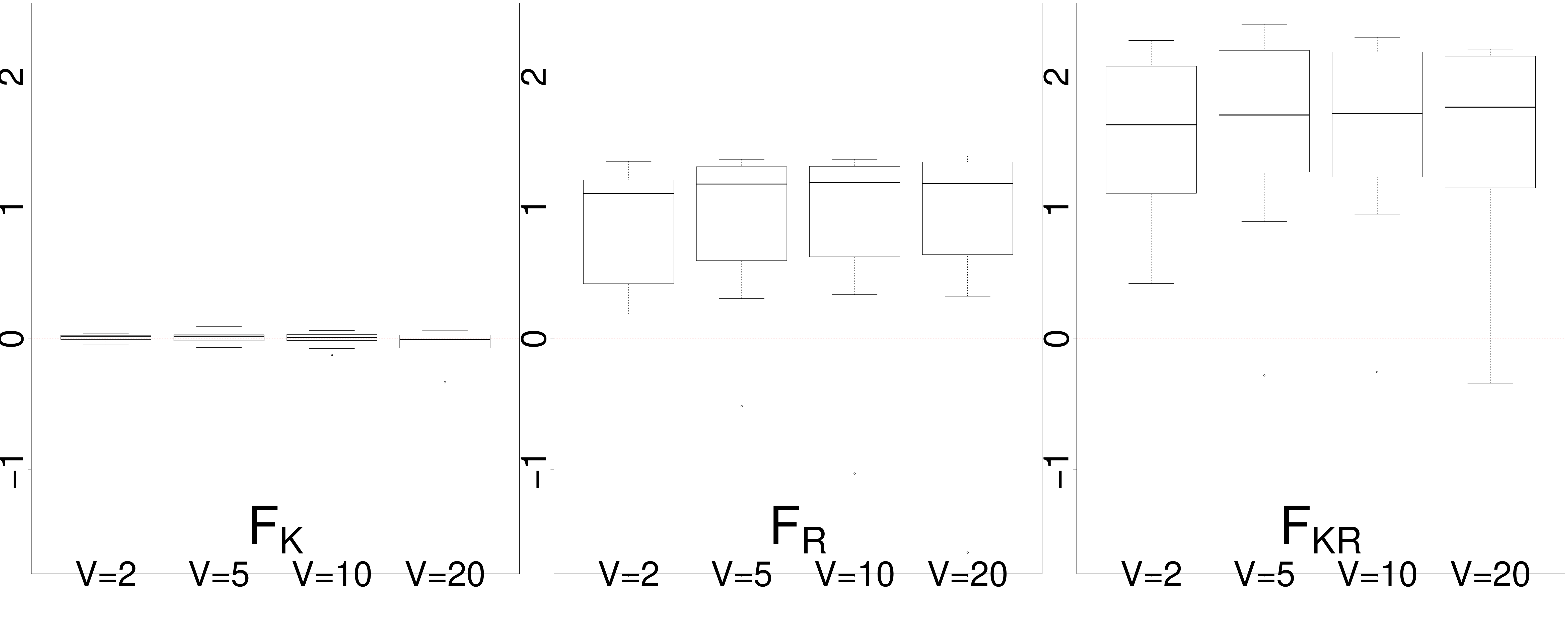} \]
\caption{\label{fig:TVF_Baraud vs contrastVF (hellinger)} \small From left to right, the boxplots of $\overline{W}_s\paren{\tilde s,\tilde s_{\rm TVF},h^2}$ using families $\Fcal_{\rm K},\Fcal_{\rm R}$ and $\Fcal_{\rm KR}$ (up for $\tilde s=\widehat{s}_{\widehat{m}_{{\rm LSVF}}}$, down for $\tilde s=\widehat{s}_{\widehat{m}_{{\rm KLVF}}}$). Each subfigure shows the boxplot for $V=2,5,10$ and 20. The horizontal red dotted line provides the reference value 0.}\vspace{2mm}
\end{figure}

\subsection*{Influence of the test on the TVF}\label{sub:ComparisonTest}
We compare here the performances of the best TVF procedure (among the five values of $\theta$ described above) derived from Birg\'e's test \eqref{eq:test_birge} against the one deduced from Baraud's test \eqref{eq:test_baraud} (denoted $\widehat{s}_{\widehat{m}_{\rm TVF}}$). We show the conclusion of our study for the families $\Fcal_{\rm R}$, $\Fcal_{\rm K}$ and $\Fcal_{\rm KR}$, $n=500$ and $V=2,5,10$ and 20. The results are very similar for other values of $n$. For the sake of clarity and to emphasize the similarity of both procedures in terms of Hellinger risk, we present for each family, for each $V$, the supremum and the infimum over $\Lcal$ of the ratio
\[ \Upsilon(s)=\set{\inf_{\theta\in \Theta} \overline{R}_n\paren{\widehat{s}_{\widehat{m}(\theta)},s,h^2} \Big/ \overline{R}_n\paren{\widehat{s}_{\widehat{m}_{\rm TVF}},s,h^2} }. \] 
If $\inf_{s\in \Lcal} \Upsilon(s)\ge1$ the TVF using Baraud's test behaves in a better way than the one using Birg\'e's test for all densities in $\Lcal$ while if $\sup_{s\in \Lcal} \Upsilon(s)\le1$ the opposite holds. The closer the two values, the more similar the quality of both procedures.

\begin{table}[h]
\centering
\small
\begin{tabular}{|l|c|r|r|r|r|}
\hline
family&$\Upsilon(s)$&$V=2$&$V=5$&$V=10$&$V=20$\\\hline
\multicolumn{1}{|c|}{\multirow{2}{*}{$\Fcal_{\rm R}$}}&$\sup_s$&103,68&102,59&101,72&102,27 \\
\multicolumn{1}{|c|}{}&$\inf_s$&98,16&100,07&99,59&99,13 \\\hline
\multicolumn{1}{|c|}{\multirow{2}{*}{$\Fcal_{\rm K}$}}&$\sup_s$&102,78&100,80&100,92&105,10\\
\multicolumn{1}{|c|}{}&$\inf_s$&99,58&98,72&97,45&96,13\\\hline
\multicolumn{1}{|c|}{\multirow{2}{*}{$\Fcal_{\rm KR}$}}&$\sup_s$&116,71&115,80&116,79&116,73\\
\multicolumn{1}{|c|}{}&$\inf_s$&96,70&98,84&99,08&99,30\\
\hline
\end{tabular}
\caption{\label{tab:testLB_testYB} \small Supremum and infimum of 100 times the ratio, see the text.}
\end{table}

We see from this table that Baraud's and Birg\'e's test are very similar to process the TVF procedure for families $\Fcal_{\rm R}$ and $\Fcal_{\rm K}$. There is indeed no noticeable difference for these families, the largest gain (for a density in $\Lcal$) being of 5\% only.
The procedure based on Baraud's test becomes much better for the family $\Fcal_{\rm KR}$. We observe indeed that a potential gain of 15\% appears (since the $\sup_s$ is close to 115\%) while the loss is negligible (since the $\inf_s$ is close to 99\%). 
Moreover, the ratios are quite similar when $V$ increases.
Finally, let us recall that the TVF procedure based on \eqref{eq:test_birge} is less time-consuming since it requires to compute only one integral instead of two for \eqref{eq:test_baraud}.
\end{document}